\DeclareSymbolFont{bchoperators}{T1}{bch}{m}{n}
\renewcommand{\operator@font}{\mathgroup\symbchoperators}
\titleformat{\section}{\normalfont\bfseries\filcenter}{\thesection}{1em}{}
\titleformat{\subsection}{\normalfont\bfseries}{\thesubsection}{1em}{}
\titleformat{\subsubsection}{\normalfont\bfseries}{\thesubsubsection}{1em}{}
\definecolor{darkgreen}{rgb}{0, 0.5, 0}
\newcommand{\Z}{{\mathbb Z}}
\newcommand{\Q}{{\mathbb Q}}
\newcommand{\F}{{\mathbb F}}
\newcommand{\C}{{\mathbb C}}
\newcommand{\BP}{{\mathbb P}}
\newcommand{\To}{\longrightarrow}
\newcommand{\Pic}{\operatorname{Pic}}
\newcommand{\rank}{\operatorname{rank}}
\newcommand{\sing}{\text{\rm sing}}
\newcommand{\eps}{\varepsilon}
\newcommand{\cE}{\mathcal{E}}
\newcommand{\cG}{\mathcal{G}}
\newcommand{\cO}{\mathcal{O}}
\newcommand{\cQ}{\mathcal{Q}}
\newcommand{\bP}{\mathbb{P}}
\newcommand{\bQ}{\mathbb{Q}}
\newcommand{\Qbar}{\overline{\mathbb{Q}}}
\newcommand{\Gal}{\operatorname{Gal}}
\newcommand{\Aut}{\operatorname{Aut}}
\newcommand{\Sym}{\operatorname{Sym}}
\newcommand{\PSL}{\operatorname{PSL}}
\renewcommand{\H}{\operatorname{H}}
\newtheorem{theorem}{Theorem}
\newtheorem{prop}[theorem]{Proposition}
\newtheorem{lemma}[theorem]{Lemma}
\newtheorem{Theorem}[theorem]{Theorem}
\newtheorem{Lemma}[theorem]{Lemma}
\newtheorem{Proposition}[theorem]{Proposition}
\newtheorem{Corollary}[theorem]{Corollary}
\newtheorem{Question}[theorem]{Question}
\theoremstyle{definition}
\newtheorem{Definition}[theorem]{Definition}
\theoremstyle{remark}
\numberwithin{equation}{section}
\begin{document}

\title{The surface parametrizing cuboids}

\author{Michael Stoll}
\address{Department of Mathematics,
         University of Bayreuth,
         95440 Bayreuth, Germany}
\email{Michael.Stoll@uni-bayreuth.de}

\author{Damiano Testa}
\address{Mathematics Institute,
         Zeeman Building,
         University of Warwick,
         Coventry CV4 7AL, England}
\email{D.Testa@warwick.ac.uk}

\date{February 24, 2025} 

\subjclass{14J29, 14C22, 14J50, 14G05}

\begin{abstract}
  We study the surface~$\bar{S}$ parametrizing cuboids: it is defined by the
  equations relating the sides, face diagonals and long diagonal
  of a rectangular box. It is an open problem whether a `rational box'
  exists, i.e., a rectangular box all of whose sides, face diagonals
  and long diagonal have (positive) rational length. The question is
  equivalent to the existence of nontrivial rational points on~$\bar{S}$.

  Let~$S$ be the minimal desingularization of~$\bar{S}$ (which has
  48 isolated singular points). The main result of this paper is the
  explicit determination of the Picard group of~$S$, including
  its structure as a Galois module over~$\Q$. The main ingredient for
  showing that the known subgroup is actually the full Picard group
  is the use of the combined action of the Galois group and the
  geometric automorphism group of~$S$ (which we also determine)
  on the Picard group. This reduces the proof to checking that the
  hyperplane section is not divisible by~2 in the Picard group.

  We use our explicit knowledge of the Picard group, together with that
  of a K3 surface obtained as a quotient of~$S$, to study curves of low
  degree on~$\bar{S}$. In this way, we completely classify all integral curves
  of degree at most~6 on~$\bar{S}$.
\end{abstract}

\maketitle


\section{Introduction}

Let $\bP^6$ be the projective space over $\Q$ with homogeneous coordinates
$a_1$, $a_2$, $a_3$, $b_1$, $b_2$, $b_3$, $c$; let $\bar{S}$ be the
surface in $\bP^6$ defined by
\begin{equation} \label{esse}
\left\{ \begin{array}{rrrrrrl}
            & a_1^2 &+& b_1^2 && = & c^2 \\[5pt]
            & a_2^2 &+& b_2^2 && = & c^2 \\[5pt]
            & a_3^2 &+& b_3^2 && = & c^2 \\[5pt]
          a_1^2 &+& a_2^2 &+& a_3^2 & = & c^2
        \end{array} \right.
\end{equation}
and note that the equations in~\eqref{esse} are equivalent to the equations
\begin{equation} \label{esse1}
\left\{ \begin{array}{rrrrrcl}
            a_1^2 &+& a_2^2 & &       & = & b_3^2 \\[5pt]
            a_1^2 & &       &+& a_3^2 & = & b_2^2 \\[5pt]
                  & & a_2^2 &+& a_3^2 & = & b_1^2 \\[5pt]
            a_1^2 &+& a_2^2 &+& a_3^2 & = & c^2.
        \end{array} \right.
\end{equation}

These equations encode the relations between the three sides
$a_1, a_2, a_3$, the three face diagonals $b_1, b_2, b_3$ and
the long diagonal~$c$ of a three-dimensional rectangular box.

The interest in this surface comes from a famous open problem:
\begin{center}
  \emph{Does there exist a `rational box'?}
\end{center}
A rational box is a (non-degenerate) rectangular box all of whose
sides, face diagonals and long diagonals have rational length.
The existence of a rational box is therefore equivalent to the
existence of a rational point on~$\bar{S}$ with $a_1 a_2 a_3 \neq 0$.
See van~Luijk's undergraduate thesis~\cite{vanLuijk} for a summary
of the literature on this problem.

In this paper, we hope to make progress toward a better understanding
of the rational box surface by proving some results on its geometry.
This extends results obtained in~\cite{vanLuijk}.

The surface~$\bar{S}$ has 48 isolated $A_1$~singularities. We let~$S$
denote the minimal desingularization of~$\bar{S}$. Then we show the
following.

\begin{Theorem} \label{ThmAut}
  $\Aut_{\Qbar}(S) = \Aut_{\Qbar}(\bar{S})$ is an explicitly given group~$G$ of
  order 1536. Its action on~$\bar{S}$ extends to a linear action
  on the ambient~$\BP^6$.
\end{Theorem}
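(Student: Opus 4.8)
The plan is to reduce the statement to a linear-algebra problem about the net of quadrics defining $\bar S$, to exhibit the group explicitly, and then to bound it from above using the discriminant arrangement.

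First I would establish that every automorphism is linear. Since $\bar S$ is a complete intersection of four quadrics in $\bP^6$, adjunction gives $K_{\bar S}=\cO(-7+8)|_{\bar S}=\cO(1)|_{\bar S}$; the $48$ singularities are nodes, hence canonical, so the minimal resolution is crepant, $K_S=\pi^\ast\cO(1)$, and is big and nef. Because these nodes are the entire singular locus, an automorphism of $\bar S$ fixes it setwise and lifts to $S$, while an automorphism of $S$ preserves the exceptional $(-2)$-curves and descends; thus $\Aut_{\Qbar}(S)=\Aut_{\Qbar}(\bar S)$. Every such automorphism preserves $K_S$ and its $7$-dimensional space of sections $\H^0(S,K_S)\cong\H^0(\bar S,\cO(1))$, so the canonical map, which is the given $S\to\bar S\hookrightarrow\bP^6$, is equivariant: each automorphism is induced by an element of $\mathrm{PGL}_7$ preserving $\bar S$. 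As the degree-$2$ part of the ideal of $\bar S$ is the $4$-dimensional space $V$ spanned by the quadrics in~\eqref{esse}, this identifies $\Aut_{\Qbar}(S)$ with $\operatorname{Stab}_{\mathrm{PGL}_7}(V)$.

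Next I would exhibit an explicit subgroup of order $1536$. Over $\Qbar$ set $w=ic$; then $P_4=a_1^2+a_2^2+a_3^2+w^2$ is symmetric in $\{a_1,a_2,a_3,w\}$, and on $\bar S$ each $b_\ell^2$ equals the sum of one pair of these four squares and minus the sum of the complementary pair (for instance $b_3^2=a_1^2+a_2^2=-(a_3^2+w^2)$). Hence the three face diagonals correspond bijectively to the three partitions of $\{a_1,a_2,a_3,w\}$ into pairs, and $S_4$ permuting $a_1,a_2,a_3,w$ acts on $\bar S$, permuting the $b_\ell$ through the standard surjection $S_4\to S_3$. I would verify this on one transposition, e.g.\ $a_1\mapsto ic,\ c\mapsto -ia_1,\ b_2\mapsto ib_3,\ b_3\mapsto ib_2$ (fixing $a_2,a_3,b_1$), a monomial substitution over $\Q(i)$ which one checks sends $P_2\mapsto P_4-P_3$, $P_3\mapsto P_4-P_2$ and fixes $P_1,P_4$, hence preserves~\eqref{esse}. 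Together with the sign changes on the seven coordinates, forming $(\Z/2)^6$ in $\mathrm{PGL}_7$, these generate a group $(\Z/2)^6\rtimes S_4$ of order $64\cdot 24=1536$ acting linearly on $\bP^6$.

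For the upper bound I would use the action of $\operatorname{Stab}(V)$ on $\bP(V)\cong\bP^3$. A member of the net is a diagonal quadric whose determinant is the product of the seven coefficients of $a_1^2,\dots,c^2$, so its discriminant is the arrangement $\cH$ of the seven planes $\lambda_1+\lambda_4,\dots,\lambda_1+\lambda_2+\lambda_3+\lambda_4$; thus $\operatorname{Stab}(V)$ permutes these planes, giving $\operatorname{Stab}(V)\to\Aut(\cH)$. Dually the planes are seven points whose only coplanar quadruples are six explicit ones, splitting the points into $\{b_1,b_2,b_3\}$ and $\{a_1,a_2,a_3,c\}$; the former are permuted through $S_4\to S_3$ by the action on the latter four, and since five of the points form a projective frame the map $\Aut(\cH)\to S_4$ is injective, so $\Aut(\cH)=S_4$. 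The kernel of $\operatorname{Stab}(V)\to\Aut(\cH)$ consists of automorphisms preserving every quadric of the net, which a direct analysis identifies with the sign group $(\Z/2)^6$. Combining, $|\operatorname{Stab}(V)|\le 64\cdot 24=1536$, matching the group already built, so equality holds and the action is linear on~$\bP^6$.

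The main obstacle is the upper bound rather than the construction: exhibiting the $1536$ automorphisms is a finite verification, and the image bound $\Aut(\cH)=S_4$ is an elementary matroid computation, but controlling the kernel of $\operatorname{Stab}(V)\to\Aut(\cH)$, that is, ruling out an unexpected automorphism of $\bP^6$ preserving every quadric through $\bar S$, is where I expect the real difficulty to lie. The canonical-bundle reduction of the first step, by contrast, is clean and makes the whole problem finite.
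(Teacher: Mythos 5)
Your proposal is correct, and while its first half coincides with the paper's argument, the upper bound goes by a genuinely different route. The reduction (adjunction gives $K_{\bar S}=\cO_{\bar S}(1)$, the nodes are canonical so the resolution is crepant, $\Aut_{\Qbar}(S)=\Aut_{\Qbar}(\bar S)$, and equivariance of the canonical map identifies this group with the linear stabilizer of the four-dimensional space $V$ of quadrics) is exactly the paper's first step, and your explicit group is the paper's $G'$ written in the symmetrized coordinates $\{a_1,a_2,a_3,w=ic\}$ --- the paper uses the same symmetrization in its order count via the surjection $G\to\mathfrak{S}_4$ with kernel the sign changes. The divergence is in the hard step: the paper shows that the six rank-three quadrics $Q_1,Q_2,Q_3,R_1,R_2,R_3$ in the ideal form a $G$-stable set on which $G'$ acts transitively, and then computes the stabilizer of~$Q_1$ \emph{geometrically}, using that it permutes the eight points of $Q_1^{\sing}\cap\bar S$ and that an automorphism of $V(a_1,b_1,c)\simeq\BP^3$ fixing those eight points is the identity; you instead work entirely inside the web $\bP(V)\cong\BP^3$ via the discriminant arrangement of seven planes, the combinatorial injection $\Aut(\cH)\hookrightarrow \mathfrak{S}_4$, and the kernel computation, never using points of $\bar S$ at all. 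The two arguments are dual aspects of one structure: your six coplanar quadruples of dual points correspond exactly to the six points of $\bP(V)$ where four discriminant planes meet, which are precisely the paper's six rank-three quadrics. I checked your combinatorics (there are indeed exactly six coplanar quadruples, each $b$-point lies on four of them and each of $a_1,a_2,a_3,c$ on three, forcing the split and determining the $b$'s from the $a$'s, and $\{a_1,a_2,a_3,c,b_1\}$ is a frame), and the step you flag as the real difficulty is in fact easy by your own sketch: an element of the kernel acts on $V$ by a scalar, so a suitably rescaled $\mathrm{GL}_7$-lift fixes every form in~$V$; it therefore fixes the vertex of a generic rank-six member of each of the seven discriminant planes, and these vertices are exactly the seven coordinate points of~$\BP^6$, so the element is diagonal, and invariance of $q_1,\dots,q_4$ forces all squared entries to be equal, leaving precisely $(\Z/2)^7/\{\pm1\}\cong(\Z/2)^6$. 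One cosmetic slip: your displayed transposition lift squares to the sign change on $b_2,b_3$, so your generators present $G'$ as an extension of $\mathfrak{S}_4$ by $(\Z/2)^6$ rather than visibly as a semidirect product; this does not affect the order count, since the kernel of the map to $\mathfrak{S}_4$ is still exactly the sign group.
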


This group comes from the obvious independent sign changes on all coordinates
and the equally obvious simultaneous permutations of the~$a$ and~$b$ coordinates,
together with a less obvious automorphism. This leads to an exact sequence
\[ 1 \To \mu_2^7/\mu_2 \To G \To \mathfrak{S}_4 \To 1 \,, \]
where the kernel comes from the sign changes
and the action of the $\mathfrak{S}_4$ quotient can be visualized
as the action of the symmetry group of the tetrahedron in the diagram below
(where an edge $\xymatrix@1{x \ar@{-}[rr]|{\textstyle y} & & z}$ corresponds to one
of the rank~$3$ quadrics $x^2 + z^2 = y^2$ defining~$\bar{S}$).
\begin{equation} \label{tetrahedron}
\begin{gathered}
   \xymatrix{ & & a_1 \ar@{-}[dddll]|{\textstyle b_3} \\
              \\
              & & ic \ar@{-}[uu]|{\textstyle ib_1}\ar@{-}[lld]|{\textstyle ib_2} \ar@{-}[rrd]|{\textstyle ib_3} \\
              a_2 \ar@{-}[rrrr]|{\textstyle b_1} & & & & a_3 \ar@{-}[uuull]|{\textstyle b_2}
            }
\end{gathered}
\end{equation}
See Section~\ref{SectAut} for details. The group was already known to van~Luijk;
we prove here that it is already the full automorphism group.

\begin{Theorem} \label{ThmPic}
  The geometric Picard group of~$S$ has maximal rank
  \[ \rank \Pic S_{\Qbar} = \dim \H^{1,1}(S(\C)) = 64. \]
  It is generated by an explicitly
  known set of curves on~$\bar{S}$ together with the exceptional
  divisors. The discriminant of the intersection pairing on the
  Picard group is~$-2^{28}$.
\end{Theorem}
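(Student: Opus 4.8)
The plan is to prove Theorem~\ref{ThmPic} in three stages: first establish the upper bound $\rank\Pic S_{\Qbar}\le 64$ by a Hodge-theoretic computation, then exhibit an explicit subgroup of rank $64$ inside $\Pic S_{\Qbar}$, and finally show that this known subgroup is in fact the \emph{entire} Picard group by a saturation argument.

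For the upper bound, since $\Pic S_{\Qbar}$ injects into $\H^2(S(\C),\Z)$ and its image lands in the $(1,1)$-part by the Lefschetz $(1,1)$-theorem, I would compute the Hodge numbers of the smooth surface $S$. The surface $\bar S$ is a complete intersection of four quadrics in $\bP^6$; from the adjunction/Koszul data I can read off $p_g$, $q$, and hence $\dim\H^{1,1}$ for the blow-up $S$ resolving the $48$ nodes, each of which contributes a $(-2)$-curve that adds $1$ to both $b_2$ and to $\dim\H^{1,1}$. This should yield $\dim\H^{1,1}(S(\C))=64$, giving $\rank\Pic S_{\Qbar}\le 64$.

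For the lower bound I would write down an explicit rank-$64$ sublattice. The natural generators are the $48$ exceptional $(-2)$-curves from the resolution, together with a collection of curves visible on $\bar S$ itself: the hyperplane section class and the various conics/lines cut out by the obvious linear and quadratic relations among the coordinates (for instance the many coordinate hyperplane sections, which split into components because of the rich structure of~\eqref{esse}). Computing the Gram matrix of the intersection pairing on these classes—using adjunction on $S$ to get self-intersections and direct incidence counts for the off-diagonal entries—I would check that its rank is $64$, matching the upper bound, so $\Pic S_{\Qbar}\otimes\Q$ is spanned by these classes. I expect this bookkeeping to produce a lattice whose discriminant is a power of $2$ times $-1$.

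The genuinely hard step, and the one the abstract flags, is upgrading ``finite-index subgroup'' to ``equality,'' i.e.\ showing the explicit lattice is saturated in $\Pic S_{\Qbar}$. Equivalently I must rule out the existence of any class $D\in\Pic S_{\Qbar}$ with $nD$ in the known lattice for some prime~$n$ dividing the index. The strategy I would follow is exactly the one indicated in the abstract: exploit the large combined symmetry group, namely the order-$1536$ geometric automorphism group $\Aut_{\Qbar}(S)$ from Theorem~\ref{ThmAut} acting together with $\Gal(\Qbar/\Q)$ on $\Pic S_{\Qbar}$. A hypothetical extra class and its orbit under this group must themselves lie in $\Pic S_{\Qbar}$, and the representation theory of the group action severely constrains where such a class could sit; tracing through the invariant sublattices and the congruences they impose should reduce the whole question to a single divisibility check, namely that the hyperplane section class is \emph{not} divisible by~$2$ in $\Pic S_{\Qbar}$. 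That final non-divisibility I would verify by a mod-$2$ reduction argument (for example, reducing at a prime of good reduction and comparing with the N\'eron--Severi group of the reduction, or by intersecting a putative half-class against the explicit generators to derive a contradiction on parities of intersection numbers). Once saturation is established, the Gram matrix computed in the second stage gives the discriminant $-2^{28}$ directly.
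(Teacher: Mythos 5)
Your overall architecture matches the paper's exactly: the Hodge diamond of the resolution gives $\dim\H^{1,1}(S(\C))=64$; Proposition~\ref{PropRank} realizes your second stage with the explicit list $\cG$ of 140 curves; and your saturation step is precisely Theorem~\ref{Thm2Gp}, where the discriminant $-2^{28}$ shows only $p=2$ matters, the kernel $L_2$ of $L/2L\to\Pic S/2\Pic S$ is invariant under the group $\tilde G$ generated by $\Aut(S)$ and $\Gal(\Q(i,\sqrt2)/\Q)$, and the fixed-point argument for a Sylow $2$-subgroup acting on an $\F_2$-vector space (plus a computer check that $(L/2L)^{\tilde G_2}$ is one-dimensional, spanned by $K_S$ mod~$2$) reduces everything to the non-divisibility of the canonical class. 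Two minor caveats on your second stage: $\bar S$ contains no lines (indeed no curves of odd degree), and the coordinate-hyperplane curves alone do not suffice --- the paper also needs the 48 genus-one quartics in the non-coordinate hyperplanes $a_j=\eps a_{j+1}$ and $a_j=\eps i c$ to reach rank 64.

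The genuine gap is your proposed verification of the final non-divisibility, and it sits at exactly the step the whole reduction is designed to isolate. The parity-of-intersections idea cannot work: $K_S\cdot C$ is the degree of~$C$, which equals $0$, $2$, or $4$ for every class in~$\cG$, all even, so a hypothetical $R$ with $2R\sim K_S$ pairs integrally with the entire known lattice and no parity contradiction can arise. (The paper's Corollary shows the pairing on $\Pic S$ is even, and the one-dimensionality of $(L/2L)^{\tilde G_2}$ says precisely that $K_S$ mod~$2$ passes every lattice-theoretic test --- this is why the class must be killed by geometry, not arithmetic of the Gram matrix.) Your mod-$p$ alternative is also not workable as stated: you would need to rule out $2$-divisibility in $\Pic S_{\overline{\F}_p}$, whose rank can jump (up to $b_2=78$ here) and which is not computable in practice, and the cokernel of the specialization map need not be torsion-free, so divisibility downstairs would not even yield the contradiction. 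The paper instead proves Lemma~\ref{dueca} by a geometric argument: if $K_S\sim 2R$, then Riemann--Roch together with Serre duality (note $K_S-R\sim R$) forces $h^0(S,\cO_S(R))\in\{3,4\}$; the image of $\Sym^2\H^0(S,\cO_S(R))$ in $\H^0(S,\cO_S(K_S))$ is $\Aut(S)$-invariant, and since the only nontrivial invariant subspaces of $\BP^6$ are $V(a_1,a_2,a_3,c)$ and $V(b_1,b_2,b_3)$, its dimension lies in $\{0,3,4,7\}$; a short case analysis on the image of $S$ under $|R|$ (in $\BP^2$ it would be forced to be degenerate; in $\BP^3$ it would be a twisted cubic, making $\bar S$ a curve) gives a contradiction in both cases. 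Some argument of this kind --- note it uses the automorphism group a second time, independently of the Sylow reduction --- is indispensable to complete your proof.
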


We can take a suitable subset of~$64$ of the following curves (together with
the~$48$ exceptional divisors) as generators (see Definition~\ref{D:curves}).
\begin{enumerate}[$\bullet$]
  \item The 32 strict transforms of the conics in the four hyperplanes
        $a_1 = 0$, $a_2 = 0$, $a_3 = 0$, $c = 0$;
  \item the 12 strict transforms of the genus~1 curves contained in the three
        hyperplanes $b_1 = 0$, $b_2 = 0$, $b_3 = 0$;
  \item the 48 strict transforms of the genus~1 curves contained in the twelve
        hyperplanes $a_j = \eps a_{j+1}$ (where we set $a_4 := a_1$) or $a_j = \varepsilon i c$,
        where $j \in \{1,2,3\}$ and $\eps \in \{1,-1\}$.
\end{enumerate}

See Section~\ref{SectPic}. It is not hard to show that the geometric
Picard rank is~$64$, since one easily finds enough curves to generate
a group of that rank. These curves are already in~\cite{vanLuijk}.
The hard part is to show that the known curves
generate the full Picard group and not a proper subgroup of finite
index. Since~$2$ is the only prime number dividing the discriminant
of the known subgroup, it remains to show that no primitive element of
the known subgroup is divisible by~$2$. We use the known action of
the automorphism group together with the action of the absolute
Galois group of~$\Q$ to reduce the proof of saturation to the statement
that the single element corresponding to the hyperplane section is
not divisible by~$2$. This claim is then fairly easily established.

This technique, especially the arguments in the proof of Theorem~\ref{Thm2Gp},
may be helpful in similar situations, when one has a
fairly large group acting on the Picard group. Indeed, A.~V\'arilly-Alvarado
and B.~Viray~\cite{VarillyViray} use this technique to compute the Picard
groups of various Enriques surfaces.

Several papers investigate restrictions that curves of geometric genus~0 or~1
on a surface must satisfy.
The work of Bogomolov in~\cite{Bo} highlighted the importance of symmetric
differentials.

In~\cite{BTV}, the authors study global sections of symmetric differentials
on certain surfaces of general type.
They establish what effect the singularities of the surface have on the sections
that they determine.
In turn, this allows them to deduce lower bounds for the number of singularities
that the curves must contain, as well as information about dimensions of the
linear spaces that the singularities must span.
In particular, when they apply their methods to the surface of
cuboids~\cite{BTV}*{Theorem~1.2}, they conclude that any rational curve,
other than the known conics, must pass through at least~7 nodes spanning
the ambient~$\mathbb{P}^6$.
See the corresponding paper for more details.

In~\cite{GFU}, the authors also study symmetric differentials, but, rather than
finding several explicit sections, they focus on solving the implied algebraic
differential equations that they impose on curves of genus at most~1.
This method was pioneered by Vojta~\cite{Vojta}.
When they apply their methods to the surface of cuboids~\cite{GFU}*{Theorem~1.2},
they conclude that all curves of genus at most~1 on~$\bar{S}$ must contain at
least~2 singular points.
Moreover, apart from the exceptional curves and the curves contained in
$a_1 a_2 a_3 c = 0$, every rational curve on~$\bar{S}$ must contain at
least~8 singular points (counted with multiplicity).
See the corresponding paper for more details.

We obtain a similar result with our methods (see Lemma~\ref{L:bounds}): Any
rational curve~$C$ on~$\bar{S}$ that is not a conic must satisfy $C \cdot E \ge 8$,
where~$E$ is the exceptional divisor (and we identify~$C$ with its strict transform),
and any curve~$C$ of geometric genus~$1$ on~$\bar{S}$ must satisfy $C \cdot E \ge 4$
(this gives an improvement over the result in~\cite{GFU}).

In~\cite{FM}, the authors study the surface of cuboids using theta-functions.
They exploit the fact that the surface is a divisor in a Siegel modular variety
and determine explicitly a modular group associated to the surface.
In particular, they find an explicit product of modular curves that covers
the surface of cuboids.
This allows them to find bounds for the degree of a unibranch curve on the surface
in terms of the genus of the curve: see~\cite{FM}*{Theorem~3.1}.
For instance, unibranch rational curves on~$\bar{S}$ must have degree at most 176.
See the corresponding paper for more details.

\medskip

This paper contains a number of computational results. We provide a Magma~\cite{Magma}
script at~\cite{Code} that contains code that checks most of the computational
claims we make (set \texttt{quick := false} to also run checks that take more time).
We also provide a transcript of a Magma session where we have computed equations
for the fibrations given in Section~\ref{SectFib}.


\section{The Automorphism Group} \label{SectAut}

Throughout the paper, unless stated explicitly, all objects will be considered
over $\bar{\Q}$ (or~$\C$). In particular, $\Pic S$ denotes the geometric Picard
group and $\Aut(S)$ the geometric automorphism group of~$S$.

In this section, we determine the automorphism group of~$S$ and~$\bar{S}$.
We begin with some basic geometric properties of~$\bar{S}$.

\begin{Lemma}
  The scheme $\bar{S}$ is a geometrically integral complete intersection of
  dimension two and multidegree $(2,2,2,2)$ in $\mathbb{P}^6$ with 48
  isolated $A_1$ singularities.
\end{Lemma}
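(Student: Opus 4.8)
The plan is to verify each assertion in the lemma statement separately: that $\bar S$ is a complete intersection of multidegree $(2,2,2,2)$, that it has dimension two, that it is geometrically integral, and finally that its singular locus consists of exactly 48 isolated $A_1$ points. The four defining equations in~\eqref{esse} are clearly four quadrics in $\bP^6$, so if their common zero locus has the expected codimension four, then $\bar S$ is automatically a complete intersection of dimension two and multidegree $(2,2,2,2)$. Thus the first task is to confirm that $\bar S$ has dimension exactly two (rather than jumping to a higher-dimensional component), which I would do either by exhibiting that the four quadrics form a regular sequence or, more concretely, by a direct dimension count on the affine cone.

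For geometric integrality, I would work over $\Qbar$ and argue that $\bar S$ is reduced and irreducible. A clean route is to show that $\bar S$ is normal away from its finitely many singular points: a complete intersection is automatically Cohen--Macaulay, hence satisfies Serre's condition $(S_2)$, so by Serre's criterion it suffices to check that the singular locus has codimension at least two, i.e., that $\bar S$ is smooth in codimension one $(R_1)$. Normality then gives reducedness and, together with connectedness (which for a positive-dimensional complete intersection in projective space follows from the fact that complete intersections are connected), yields irreducibility. The key computational input here is the analysis of the singular locus.

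The heart of the proof, and the main obstacle, is the explicit determination of the singular locus: showing there are exactly 48 singular points and that each is an ordinary double point of type $A_1$. I would compute the Jacobian matrix of the four quadrics and locate the points of $\bar S$ where its rank drops below four. Because the equations are diagonal in the squares of the coordinates, the Jacobian has a transparent structure, and the rank-degeneracy condition should reduce to a manageable system of equations in the coordinates together with sign choices; the symmetry of the configuration under coordinate permutations and sign changes should let me find all solutions up to that symmetry and then count orbits to reach the number 48. For the $A_1$ classification, at each singular point I would compute the Hessian of a local defining equation (after slicing down to a surface by the complete-intersection structure) and check that it is a nondegenerate quadratic form of rank two, equivalently that the projectivized tangent cone is a smooth conic; a nonvanishing Hessian determinant is exactly the condition for an ordinary double point $A_1$. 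I expect the bookkeeping of the 48 points and the verification of nondegeneracy at each orbit representative to be the most delicate part, though the abundant symmetry of the surface should keep it tractable.
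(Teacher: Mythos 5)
Your proposal is correct and follows essentially the same skeleton as the paper's proof: establish dimension two so that $\bar S$ is a complete intersection of multidegree $(2,2,2,2)$, invoke Cohen--Macaulayness and Serre's criterion (a codimension-two singular locus gives $R_1$, completeness of the intersection gives $S_2$ and connectedness, hence normality and then integrality), and analyze the Jacobian of the four quadrics, exploiting the permutation and sign-change symmetries, to locate the 48 singular points and verify they are of type $A_1$. The one step where you diverge is the dimension count: the paper intersects $\bar S$ with the hyperplane $c=0$ and observes that the section is the union of eight smooth conics over $\Q(i)$, hence pure of dimension one, which bounds the dimension of every component of $\bar S$; this single slice does double duty, since the \emph{reducedness} of the section is then reused to rule out any one-dimensional component of the singular locus inside $\{c=0\}$, allowing the paper to restrict its Jacobian analysis to points with $c \neq 0$. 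Your alternative (regular sequence or dimension count on the affine cone, followed by a global Jacobian analysis) is perfectly viable --- once $\bar S$ is known to be a complete intersection, the Jacobian criterion detects smoothness at every point, and in fact the conclusion that the Jacobian has rank four whenever $a_1 a_2 a_3 \neq 0$ or $b_1 b_2 b_3 \neq 0$ does not actually use $c \neq 0$, so your global version goes through; it just forgoes the economy of the slice. One terminological slip to fix: at an $A_1$ point of a surface the local equation is a function of \emph{three} variables whose Hessian is nondegenerate, i.e.\ a quadratic form of rank three, not rank two; your equivalent formulations in the same sentence (projectivized tangent cone a smooth conic, nonvanishing Hessian determinant) are the correct ones, so this does not affect the validity of the plan.
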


\begin{proof}
  Every irreducible component of $\bar{S}$ has dimension at least two, since $\bar{S}$
  is defined by four equations.  If $\bar{S}$ had a component of dimension at least
  three, then the intersection of $\bar{S}$ with any hyperplane would have a component
  of dimension at least two.  Let~$C$ be the hyperplane defined by the vanishing of $c$.
  The scheme $\bar{S} \cap C$ is the union of eight smooth conics defined over the field
  $\bQ(i)$ by
  \[ c = 0, \quad b_1 = \eps_1 i a_1, \quad  b_2 = \eps_2 i a_2, \quad
     b_3 = \eps_3 i a_3, \quad a_1^2 + a_2^2 + a_3^2 = 0
  \]
  for $\eps_1, \eps_2, \eps_3 \in \{\pm1\}$.
  In particular it is pure of dimension one, so that every irreducible component of
  $\bar{S}$ has dimension two; note also that $\bar{S} \cap C$ is reduced.
  It follows that $\bar{S}$ is the complete intersection of the equations
  in~\eqref{esse}.

  Thus $\bar{S}$ is Cohen-Macaulay of dimension two, and to prove that it is integral
  it suffices to show that the singular locus of $\bar{S}$ has codimension at
  least two.
  Since $\bar{S} \cap C$ is reduced, no component of dimension one of the singular
  locus of $\bar{S}$ is contained in $C$.
  Let $p = [\alpha _1 , \alpha _2 , \alpha _3 , \beta _1 ,
            \beta _2, \beta _3, \gamma ]$ be a point in $\bar{S} \setminus C$.
  Examining the
  equations of $\bar{S}$ we see immediately that if $\alpha_1 \alpha_2 \alpha_3 \neq 0$
  or $\beta_1 \beta_2 \beta_3 \neq 0$, then the rank of the Jacobian of the equations
  at~$p$ is four and such points are therefore smooth.
  We conclude at once that the singular
  points of $\bar{S}$ are the points for which all three coordinates appearing
  in one of the six rank three quadrics in~\eqref{esse} or~\eqref{esse1} vanish; in
  particular, the surface $\bar{S}$ has only finitely many singular points.
  The fact that the singular points are 48 and that they are of type $A_1$ is
  immediate from the equations.
\end{proof}

As a corollary, we deduce that $\bar{S}$ is Cohen-Macaulay, Gorenstein, reduced,
normal and projectively normal.  By the adjunction formula, the canonical sheaf
on $\bar{S}$ is the sheaf $\cO_{\bar{S}}(1)$.  Since $\bar{S}$ is projectively normal we
deduce also that $p_g(\bar{S}) = 7$; it is an easy calculation to see that
$\chi \bigl(\bar{S}, \cO_{\bar{S}}\bigr) = 8$. Let $b \colon S \to \bar{S}$ be the blow-up
of $\bar{S}$ at its 48 singular points; thus~$S$ is smooth and it is the minimal
desingularization of $\bar{S}$.  Denote by $K_S$ a canonical divisor of~$S$; we
have $\cO_S(K_S) \simeq b^* \cO_{\bar{S}}(1)$ and $(K_S)^2 = 16$.  Since the
singularities of $\bar{S}$ are rational double points, we have
$\chi \bigl(S, \cO_{S}\bigr) = \chi \bigl(\bar{S}, \cO_{\bar{S}}\bigr) = 8$ and also
$p_g(S) = 7$ and $q(S) = 0$. Using Noether's formula we finally deduce that the Hodge
diamond of~$S$ is
\[ \begin{array}{cccccc}
        &   & 1 \\
        & 0 &    & 0 \\
      7 &   & 64 &   & 7 \\
        & 0 &    & 0 \\
        &   & 1
   \end{array}
\]
It follows from the above that the rational map associated to the canonical divisor
$K_S$ on~$S$ is a morphism and that it is the contraction of the 48 exceptional curves
of~$b$, followed by the inclusion of~$\bar{S}$ into~$\BP^6$.  Therefore the canonical
divisor on~$S$ is big and nef, so that~$S$ is a minimal surface of general type
and~$\bar{S}$ is its canonical model: there are no curves on~$S$ with negative
intersection with the canonical divisor, and, in particular, there are no $(-1)$-curves
on~$S$.  Moreover, the 48 exceptional curves of~$b$ are the only $(-2)$-curves
on~$S$.  Since the morphism $S \to \mathbb{P}^6$ is the morphism associated to the
canonical divisor, the automorphism
groups of~$S$ and~$\bar{S}$ coincide; denote this group by~$G$.  The group~$G$ is
naturally identified with the subgroup of $\Aut(\BP^6)$ preserving the
subscheme~$\bar{S}$, since the canonical divisor class of~$S$ is $G$-invariant.

The symmetric group $\mathfrak{S}_3$ on~$\{1, 2, 3\}$ acts on~$\BP^6$
and~$\bar{S}$ by permuting simultaneously the indices of $a_1,a_2,a_3$ and $b_1,b_2,b_3$
and fixing~$c$.  Note that also the linear automorphism of order two
\[ \sigma \colon \left\{ \begin{array}{l@{~\mapsto~}l@{\hspace{20pt}}l@{~\mapsto~}l}
                      a_1 & a_1 & b_1 & -ib_2 \\
                      a_2 & a_2 & b_2 & ib_1  \\
                      a_3 & -ic & b_3 & b_3   \\
                      c   & ia_3
            \end{array} \right.
\]
of~$\BP^6$ preserves $\bar{S}$ and therefore induces an automorphism of~$S$.
Let $G' \subset G$ be the subgroup generated by $\mathfrak{S}_3$, $\sigma$ and all
the sign changes of the variables.

\begin{prop}
  The group $G = \Aut(S)$ is equal to $G'$.
\end{prop}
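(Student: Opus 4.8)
The plan is to prove the matching inequality $|G| \le 1536 = |G'|$; since $G' \subseteq G$ is already established, this forces $G = G'$. By the discussion preceding the statement, $G = \Aut(\bar S)$ is realized as the stabilizer of $\bar S$ inside $\Aut(\BP^6) = \mathrm{PGL}_7(\Qbar)$, so every element of $G$ acts by a linear change of the seven coordinates $a_1,\dots,c$. The strategy is to show that $G$ in fact consists of \emph{monomial} transformations, i.e.\ that $G$ permutes the seven coordinate points of $\BP^6$; once this is in place, the permutation part lands in an explicit finite subgroup of $\mathfrak{S}_7$ and the diagonal part is pinned down by a short computation.

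To force monomiality I would work with the net of quadrics through $\bar S$. Since $\bar S$ is the complete intersection of the four quadrics in~\eqref{esse}, the space $W = \H^0(\BP^6, I_{\bar S}(2))$ is exactly their four-dimensional span, and $G$ acts on $\BP(W) \cong \BP^3$. The crucial observation is that the four generators are \emph{simultaneously diagonal} in the coordinates: a general member of the net can be written $\sum_x L_x(\lambda)\,x^2$, where the $L_x$ are seven pairwise non-proportional linear forms in the net parameters $\lambda$, indexed by the seven coordinates $x$. Hence the discriminant hypersurface in $\BP(W)$, the locus of singular members of the net, is the union of the seven planes $\{L_x = 0\}$. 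As this locus is intrinsic, $G$ permutes these seven planes; and $\{L_x=0\}$ is precisely the set of members of the net having the coordinate point $e_x$ in their singular locus, so it recovers $e_x$ intrinsically. Therefore $G$ permutes the seven coordinate points, i.e.\ $G$ is monomial.

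With monomiality in hand the proof becomes finite. Writing each $g \in G$ as a coordinate permutation $\pi$ times a diagonal matrix gives a homomorphism $G \to \mathfrak{S}_7$, $g \mapsto \pi$, whose kernel is the group of diagonal matrices preserving $W$. A direct computation, using that the six rank-three members of the net are the cones $Q_1,Q_2,Q_3,Q_4-Q_1,Q_4-Q_2,Q_4-Q_3$, shows that the diagonal scalings on the seven squares must all coincide, so modulo scalars the kernel is exactly the group of sign changes, of order $2^6 = 64$. For the image, each $g$ permutes these six rank-three quadrics and hence permutes their supports, the six coordinate triples $\{a_1,b_1,c\},\dots,\{a_2,a_3,b_1\}$. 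These are six of the seven lines of a Fano plane on the seven coordinates (the missing line being $\{b_1,b_2,b_3\}$, whose associated quadric $Q_1+Q_2+Q_3-Q_4$ has rank four), so the image lies in the stabilizer of that line inside the automorphism group of the Fano plane, $\mathrm{GL}_3(\F_2)$, a subgroup of order $168/7 = 24$. Combining the two bounds, $|G| \le 64 \cdot 24 = 1536$, as required.

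The main obstacle is the monomiality step: a priori $G$ is only a subgroup of $\mathrm{PGL}_7$ and could contain transformations mixing the coordinates arbitrarily. The device that breaks this is the intrinsic reconstruction of the coordinate frame through the discriminant of the net of quadrics — it is the simultaneous diagonalizability of the defining equations that makes the discriminant split into seven hyperplanes and thereby pins down the seven coordinate points. After that, both the kernel bound (linear algebra) and the image bound (the Fano-configuration count) are routine, and one checks separately that $G'$ already surjects onto the order-$24$ image and contains all $64$ sign changes, so the two inequalities are equalities.
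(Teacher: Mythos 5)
Your proof is correct, but it takes a genuinely different route from the paper's. The paper also starts from the intrinsic four-dimensional space $W$ of quadrics through $\bar S$, but it only extracts the six rank-three members $Q_1,Q_2,Q_3,R_1,R_2,R_3$, shows $G'$ acts transitively on this six-element set, and then compares stabilizers: an automorphism fixing $Q_1$ must stabilize the eight points of $Q_1^{\sing}\cap\bar S$, and since (after sign changes) four of these points form a projective frame of $V(a_1,b_1,c)\simeq\BP^3$, such an automorphism is forced to be the identity up to sign changes of $a_1,b_1,c$; the conclusion follows by orbit--stabilizer. You instead prove the stronger structural statement that \emph{every} element of $G$ is monomial, by observing that the discriminant in $\BP(W)\cong\BP^3$ splits as the union of the seven distinct planes $\{L_x=0\}$ (your recovery of $e_x$ as the vertex of a generic rank-six member of the plane $\{L_x=0\}$ is sound, since for such a member exactly one diagonal coefficient vanishes), and then you bound the kernel (sign changes, order $2^6$) and the image (Fano-line stabilizer, order $24$) of $G\to\mathfrak{S}_7$. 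Your approach buys a cleaner global picture --- it exhibits $G$ as an extension of an $\mathfrak{S}_4$ by the sign group, and your Fano-line stabilizer acting on the four off-line points $\{a_1,a_2,a_3,c\}$ is exactly the paper's subsequent surjection $G\to\mathfrak{S}_4$ permuting $\{a_1^2,a_2^2,a_3^2,-c^2\}$, so your proof absorbs the paper's order computation rather than invoking it afterwards; it also generalizes readily to other simultaneously diagonalized complete intersections of quadrics. The paper's stabilizer argument is shorter and avoids the discriminant. Two small points to tidy: a four-dimensional linear system of quadrics is a \emph{web}, not a net (your $\BP^3$ is right); and your assertion that the six cones are the only rank-three members should be stated as a verification --- though in your setup it is immediate, since the rank of a member equals the number of nonvanishing forms among $L_{a_1},\dots,L_c$, and one checks no four of the seven planes other than the six relevant quadruples meet in a point of $\BP(W)$ (in particular no rank-three member is supported on $\{b_1,b_2,b_3\}$, matching your rank-four observation for $Q_1+Q_2+Q_3-Q_4$). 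This is essentially the same computation as the paper's first step, which rules out rank three for combinations with two or more of $\lambda_1,\lambda_2,\lambda_3$ nonzero.
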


\begin{proof}
  First, we show that the rank three quadrics vanishing on~$\bar{S}$ are the six rank
  three quadrics appearing in~\eqref{esse} and~\eqref{esse1}.
  For $j \in \{1,2,3\}$, let
  \[ q_j := a_j^2 + b_j^2 - c^2 \quad\text{and}\quad
     r_j := a_1^2 + a_2^2 + a_3^2 - a_j^2 - b_j^2,
  \]
  and let $Q_j := V(q_j)$ and $R_j := V(r_j)$. Let
  \[ q_4 := a_1^2 + a_2^2 + a_3^2 - c^2, \]
  and let~$q$ be an equation of a quadric vanishing on~$\bar{S}$.
  Since the ideal of~$\bar{S}$ is generated by the equations in~\eqref{esse}, we have
  $q = \lambda_1 q_1 + \lambda_2 q_2 + \lambda_3 q_3 + \lambda_4 q_4$, for some
  $\lambda_1, \lambda_2, \lambda_3, \lambda_4 \in \Qbar$.

  If at least two of the coefficients $\lambda_1, \lambda_2, \lambda_3$ are non-zero,
  then we easily see that~$q$ has rank at least four.  Thus at most one of the
  coefficients $\lambda_1, \lambda_2, \lambda_3$ is non-zero, and we conclude that
  the rank three quadrics vanishing on $\bar{S}$ are $q_1, q_2, q_3, r_1, r_2, r_3$.
  It follows that the set $\cQ := \{Q_1, Q_2, Q_3, R_1, R_2, R_3\}$ is fixed by the
  induced action of~$G$.

  Second, we show that $G'$ acts transitively on~$\cQ$.  This is immediate: the group
  $\mathfrak{S}_3$ acts transitively on $\{Q_1, Q_2, Q_3\}$ and~$\{R_1, R_2, R_3\}$,
  while $\sigma $ acts transitively on $\{Q_1, R_2\}$ and~$\{Q_2, R_3\}$ (and it fixes
  $Q_3$ and~$R_1$).

  Third, we show that the stabilizer of~$Q_1$ in~$G$ is the same as the stabilizer of
  $Q_1$ in~$G'$; from this the result follows since~$G$ and~$G'$ act transitively
  on~$\cQ$.  Let $\tau \in G$ be an automorphism of~$\BP^6$ stabilizing $\bar{S}$
  and~$Q_1$.  In particular $\tau$ stabilizes $Q_1^{\sing}$, the singular locus
  of~$Q_1$, and $Q_1^{\sing} \cap \bar{S}$.  The (underlying set of the) intersection
  $Q_1^{\sing} \cap \bar{S}$ is the set of eight points
  $\bigl\{[0, 1, \eps_1, 0, \eps_2 i, \eps_3 i, 0] :
             \eps_1, \eps_2, \eps_3 \in \{1,-1\} \bigr\}$.
  Clearly the actions of the stabilizers in~$G$ and~$G'$ are transitive on this set,
  since both groups contain arbitrary sign changes of the variables, and if an
  automorphism of~$V(a_1,b_1,c) \simeq \BP^3$ fixes the eight points above,
  then it is the identity (the points for which
  $\eps_1 \eps_2 \eps_3 = 1$ are essentially the characters of
  $(\mathbb{Z}/2\mathbb{Z})^2$, and therefore the corresponding points span
  $\mathbb{P}^3$).  Thus, we may assume that $\tau$ acts as the identity on
  $V(a_1,b_1,c) \subset \mathbb{P}^6$.  Hence $\tau$ fixes the variable~$c$, up to a
  sign, since it fixes $a_2, b_2$;
  similarly it fixes $a_1$ and~$b_1$ up to signs, since
  it fixes $a_2, b_3$ and $a_2, a_3$ respectively, and we conclude that
  $G' = \Aut(\bar{S}) = \Aut(S)$.
\end{proof}

To compute the size of the group $G$, note that each element of $G = G'$
induces a permutation of the set
$\{a_1^2, a_2^2, a_3^2, -c^2\}$ and every permutation is obtained.
This gives a surjective group homomorphism $G \to \mathfrak{S}_4$.
The subgroup $\langle \mathfrak{S}_3, \sigma \rangle$ permutes
$\{a_1, a_2, a_3, -ic\}$. One can check that the kernel of
$\langle \mathfrak{S}_3, \sigma \rangle \to \mathfrak{S}_4$
consists of maps that fix $b_1^2, b_2^2, b_3^2$. So the kernel
of $G \to \mathfrak{S}_4$ consists exactly of the automorphisms that
change the signs of a subset of the variables. This shows that
\[ \#G = 2^{7-1} \#\mathfrak{S}_4 = 64 \cdot 24 = 1536 . \]
The group~$G'$ is described in~\cite{vanLuijk}*{p.~25} as a subgroup
of the automorphism group of~$\bar{S}$. The new statement here is
that it is already the full automorphism group.


\section{The Picard Group} \label{SectPic}

In this section, we determine the (geometric) Picard group of~$S$.
We first show that the hyperplane section of~$\bar{S}$ is not divisible
in the Picard group.  Recall that the canonical class of~$S$ is the
pull-back of the hyperplane class of~$\bar{S}$.

\begin{Lemma} \label{dueca}
  The canonical divisor class of~$S$ is a primitive vector in $\Pic S$.
\end{Lemma}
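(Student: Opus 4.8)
The plan is to show that the class $K_S = b^{*}\cO_{\bar S}(1)$ cannot be written as $m\,D$ with $D \in \Pic S$ and $m \ge 2$. First I would pin down which integers $m$ are even possible by testing $K_S$ against explicit curves. The eight conics $C_1, \dots, C_8$ cut out on $\bar S$ by $c = 0$ are genuine degree-two curves, so their strict transforms $\tilde C_k$ on $S$ satisfy $K_S \cdot \tilde C_k = \cO_{\bar S}(1) \cdot C_k = \deg C_k = 2$. Hence any $m$ with $K_S = m D$ divides $2$, and the whole problem reduces to excluding $m = 2$; this matches the claim in the introduction that only divisibility by~$2$ must be ruled out.

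So suppose for contradiction that $K_S = 2D$ for some $D \in \Pic S$. Then $D^2 = \tfrac14 K_S^2 = 4$ and $D \cdot K_S = 8$, and since $K_S$ is nef we get $D \cdot \Gamma = \tfrac12 K_S \cdot \Gamma \ge 0$ for every curve $\Gamma$, so $D$ is nef, and big because $D^2 > 0$. Riemann--Roch together with $\chi(\cO_S) = 8$ gives
\[ \chi(D) = 8 + \tfrac12\, D \cdot (D - K_S) = 6, \]
and Serre duality identifies $h^2(D) = h^0(K_S - D) = h^0(D)$, whence $2\,h^0(D) - h^1(D) = 6$ and in particular $h^0(D) \ge 3$. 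Thus $D$ would be an effective, nef and big ``half-canonical'' class moving in a linear system of dimension at least two.

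The final step, which I expect to be the main obstacle, is to contradict the existence of such a $D$ using the concrete projective geometry of the canonical model. The multiplication map $\Sym^2 H^0(S,D) \to H^0(S, 2D) = H^0(S, K_S)$ shows that the canonical image $\bar S \subset \BP^6$, after projection to the subspace spanned by products of sections of $D$, factors through the $2$-uple Veronese embedding of the image of $\phi_D \colon S \dashrightarrow \BP^{\,h^0(D)-1}$. When $h^0(D) = 3$ this forces a projection of $\bar S$ to lie on a Veronese surface, hence to satisfy several additional rank-three quadratic relations. I would then test this against the explicit determination, carried out in the previous section, that the quadrics vanishing on $\bar S$ form only the four-dimensional space $\langle q_1, q_2, q_3, q_4 \rangle$, whose rank-three members are exactly $Q_1, Q_2, Q_3, R_1, R_2, R_3$: the extra Veronese relations cannot be accommodated, which is the contradiction I am after.

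The reason a purely numerical argument does not suffice---and the source of the difficulty---is that the conics and the exceptional curves span an even sublattice on which $K_S \cdot (-)$ takes only even values, so non-divisibility by~$2$ cannot be detected by intersection numbers against the obvious curves. One genuinely needs the finer information coming from the equations of $\bar S$; alternatively, one could exploit that the (essentially unique) class $D$ is forced to be invariant under both $G$ and $\Gal(\Qbar/\Q)$, which sharply restricts the possible systems $|D|$ and offers a second route to the same contradiction.
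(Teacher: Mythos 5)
Your reduction to the case $m=2$ via the conics in $c=0$ is fine (and slightly slicker than the paper's appeal to $(K_S)^2=16$), and through Riemann--Roch you are exactly on the paper's track: assuming $K_S \sim 2D$ you get $\chi(D)=6$, $h^2(D)=h^0(D)$ by Serre duality, hence $h^0(D)\ge 3$. But from there the proposal has a genuine gap. You never bound $h^0(D)$ from above, and you then analyze only the case $h^0(D)=3$. The paper observes that $2\dim|D| \le \dim|2D| = \dim|K_S| = 6$, so $h^0(D)\in\{3,4\}$, and the case $h^0(D)=4$ requires a separate and different argument --- it is where the paper does its hardest work: the image $S'\subset \BP^3$ lies on at least three independent quadrics, which together with irreducibility and nondegeneracy forces $S'$ to be a twisted cubic; then $\Sym^2 \H^0(S,\cO_S(D)) \to \H^0(S,\cO_S(K_S))$ is surjective, so the canonical map factors through $\phi_D$ and $\bar S$ would be a curve, a contradiction. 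Your Veronese count gives nothing directly when $h^0(D)=4$, since $\dim \Sym^2 \H^0(D) = 10 > 7 = \dim\H^0(K_S)$, so extra quadratic relations among the products of sections are then unavoidable rather than contradictory.

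Even in the case $h^0(D)=3$, your conclusion is asserted rather than proved: ``the extra Veronese relations cannot be accommodated'' needs an actual count, and the version you sketch --- comparing the rank-three relations $w_{ii}w_{jj}=w_{ij}^2$ against the list $Q_1,Q_2,Q_3,R_1,R_2,R_3$ --- does not immediately close, because over $\Qbar$ every rank-three quadric can be written as $uv=t^2$, so each single Veronese relation could a priori coincide with one of the six known quadrics. What does work is a dimension count: the products $s_is_j$ span a subspace of $\H^0(K_S)$ of dimension $5$ or $6$ (at most one conic contains the nondegenerate image in $\BP^2$), and the six-dimensional space of quadrics cutting out the Veronese surface (or the image of the conic under the Veronese) pulls back injectively to quadrics vanishing on $\bar S$, contradicting the fact that these form only the four-dimensional space $\langle q_1,q_2,q_3,q_4\rangle$. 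The paper avoids this case analysis with the device you mention only in passing at the end: since $\Pic S$ is torsion free, $D$ is automatically $\Aut(S)$-invariant, so the image of $\Sym^2\H^0(D)$ in $\H^0(K_S)$ is an $\Aut(S)$-invariant subspace, and the only invariant subspaces have dimensions $0,3,4,7$; this instantly excludes dimensions $5$ and $6$ in the $h^0=3$ case and pins down $k=3$ in the $h^0=4$ case. So the skeleton of your argument is right, but the $h^0(D)=4$ case and the actual contradiction in the $h^0(D)=3$ case both need to be supplied.
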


\begin{proof}
  Let $K_S$ be a canonical divisor of~$S$; since $(K_S)^2 = 16$, it suffices to show
  that there is no divisor~$R$ on~$S$ such that $K_S \sim 2 R$.  We argue by
  contradiction and suppose that such a divisor~$R$ exists.  By the Riemann-Roch
  formula we deduce that
  \[ h^0(S,\cO_S(R)) + h^2(S,\cO_S(R)) \geq 6, \]
  and from Serre duality it follows that $h^2(S,\cO_S(R)) = h^0(S,\cO_S(R))$;
  thus
  \[ h^0(S,\cO_S(R)) \geq 3. \]
  Note also that $2 \dim |R| \leq \dim |2R| = 6$, and therefore
  $h^0(S,\cO_S(R)) \in \{3,4\}$.
  The image $S'$ of~$S$ under the rational map determined
  by the linear system~$|R|$ is an irreducible, non-degenerate subvariety
  of~$|R|^\vee$.
  Let~$k$ be the number of independent quadratic equations vanishing along~$S'$.
  The image of
  $\Sym^2 \H^0(S,\cO_S(R))$ in $\H^0(S, \cO_S(K_S))$
  is an $\Aut(S)$-invariant subspace. We easily see that the non-trivial
  $\Aut(S)$-invariant subspaces of $\BP^6$ are $V(a_1,a_2,a_3,c)$
  and~$V(b_1,b_2,b_3)$.  It follows that the image of
  $\Sym^2 \H^0(S,\cO_S(R))$ in $\H^0(S, \cO_S(K_S))$ has dimension
  \[ \binom{h^0(S,\cO_S(R))+1}{2} - k \]
  and that this number must be in $\{0,3,4,7\}$.

  \noindent
  {\bf Case 1:} $h^0(S,\cO_S(R)) = 3$, so that $S' \subset \mathbb{P}^2$. \\
  Since $S'$ is irreducible and non-degenerate, we deduce that $S'$ cannot be
  contained in two independent quadrics, or it would be degenerate.
  Thus $S'$ must be defined by $k \le 1$ quadrics, and this is incompatible
  with the previous constraints.

  \noindent
  {\bf Case 2:} $h^0(S,\cO_S(R)) = 4$, so that $S' \subset \mathbb{P}^3$. \\
  Thus $k \ge 3$.  Since $S'$ is irreducible and non-degenerate, any quadric
  vanishing on $S'$ must be irreducible. Hence, any two independent quadrics
  vanishing along $S'$ intersect in a scheme of pure dimension one and degree four.
  Since $S'$ is non-degenerate, its degree is at least three.
  Because there are at least three independent quadrics vanishing on $S'$,
  this implies $k=3$, and $S'$ is a twisted cubic curve. But then the map
  $\Sym^2 \H^0(S,\cO_S(R)) \to \H^0(S, \cO_S(K_S))$
  is surjective, which implies that the image of~$S$ under~$K_S$
  factors through the image under~$R$.
  So $\bar{S}$ would have to be a curve, contradicting the fact
  that it is a surface.

  Thus the class of the canonical divisor $K_S$ is not the double of a divisor
  class on~$S$, and the proof is complete.
\end{proof}

Next we prove that $\Pic S$ is a free abelian group of rank~64
and find a set of generators of a subgroup of finite index.

\begin{Definition} \label{D:curves}
  We list some sets of curves on~$S$ (we let $a_4 := a_1$, to simplify the notation).
  \begin{enumerate}[(1)]
    \item $\cG_0$ --- the 48 exceptional curves of the resolution $S \to \bar{S}$
          (24 defined over $\Q$, 24 defined over $\Q(i)$);

    \item $\cG_1$ --- the 32 strict transforms of the conics in the four hyperplanes
          $a_1 = 0$, $a_2 = 0$, $a_3 = 0$, $c = 0$
          (24 defined over $\Q$, 8 defined over $\Q(i)$);

    \item $\cG_2$ --- the 12 strict transforms of the genus~1 curves contained in the three
          hyperplanes $b_1 = 0$, $b_2 = 0$, $b_3 = 0$ (defined over $\Q(i)$);

    \item $\cG_3$ --- the 48 strict transforms of the genus~1 curves contained in the twelve
          hyperplanes $a_j = \eps a_{j+1}$ or $a_j = \varepsilon i c$,
          where $j \in \{1,2,3\}$ and $\eps \in \{1,-1\}$
          (24 defined over $\Q(\sqrt{2})$, 24 defined over $\Q(i,\sqrt{2})$).
  \end{enumerate}
\end{Definition}

These curves are already described in~\cite{vanLuijk}*{p.~48}.
Denote by $\cG = \cG_0 \cup \cG_1 \cup \cG_2 \cup \cG_3$ the set consisting of the above 140 curves.

\begin{Proposition} \label{PropRank}
  The geometric Picard group of~$S$ is a free abelian group of rank~64, and
  the curves in~$\cG$ generate a subgroup of finite index.
\end{Proposition}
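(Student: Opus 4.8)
The plan is to sandwich the rank of $\Pic S_{\Qbar}$ between an upper bound coming from Hodge theory and a lower bound coming from an explicit intersection computation on the curves in $\cG$. For the upper bound, recall from the Hodge diamond that $q(S) = \dim \H^{1,0}(S(\C)) = 0$; hence $\Pic^0(S)$ is trivial and $\Pic S$ coincides with the N\'eron--Severi group $\operatorname{NS}(S)$, which is finitely generated. The first Chern class maps $\Pic S$ into $\H^{1,1}(S(\C)) \cap \H^2(S(\C),\Z)$ with kernel the torsion subgroup, so $\rank \Pic S_{\Qbar} \le \dim_{\C} \H^{1,1}(S(\C)) = 64$. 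It therefore suffices to produce $64$ classes in the span of $\cG$ that are linearly independent over $\Q$, for then the rank is exactly $64$ and $\cG$ spans a subgroup of finite index.

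For the lower bound I would compute the Gram matrix of the $140$ curves in $\cG$ with respect to the intersection pairing on $S$ and show that it has rank $64$. The required intersection numbers are all accessible from the geometry already set up. Each of the $48$ exceptional curves of $b$ is a $(-2)$-curve meeting no other exceptional curve, since the singularities of $\bar S$ are $48$ distinct isolated $A_1$ points. The remaining $92$ curves are strict transforms of explicit plane sections of $\bar S$ (conics and genus-one curves), and for such a curve one has $\tilde C = b^* C - \sum_p E_p$, the sum running over the nodes of $\bar S$ lying on $C$. Using $b^* C \cdot E_p = 0$ and $E_p^2 = -2$, every self-intersection and mutual intersection reduces to computable data: the self-intersection $\tilde C^2 = 2 p_a(\tilde C) - 2 - K_S \cdot \tilde C$ with $K_S \cdot \tilde C = \deg C$ (recall $K_S = b^* \cO_{\bar S}(1)$), the intersection numbers of the defining hyperplane sections on $\bar S$, and the incidences between the defining hyperplanes and the $48$ nodes. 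Assembling these data gives the $140 \times 140$ Gram matrix, and the main step is to certify that its rank is exactly $64$; equivalently, one exhibits $64$ curves in $\cG$ whose Gram matrix is nonsingular.

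Once both bounds are in place, the rank of $\Pic S_{\Qbar}$ equals $64$, and the classes of the curves in $\cG$, spanning a sublattice of full rank, generate a subgroup of finite index. Finally, to see that $\Pic S$ is free it suffices to rule out torsion: since $q(S) = 0$ the torsion subgroup of $\Pic S = \operatorname{NS}(S)$ injects into $\H^2(S(\C),\Z)_{\mathrm{tors}} \cong \H_1(S(\C),\Z)_{\mathrm{tors}}$, and this vanishes because $S$ is simply connected --- a complete intersection surface in $\BP^6$ is simply connected by the Lefschetz theorem, and resolving its rational double points (each replaced by a single simply connected $(-2)$-curve) does not change the fundamental group. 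The main obstacle is the bookkeeping in the intersection computation: organizing the incidences of the $140$ curves with the defining hyperplanes and the $48$ nodes carefully enough that the rank of the resulting matrix can be verified to be $64$.
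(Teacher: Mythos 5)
Your overall strategy coincides with the paper's: bound the rank above by $\dim \H^{1,1} = 64$ via the exponential sequence (using $q(S)=0$), bound it below by certifying that the $140\times 140$ intersection matrix of~$\cG$ has rank~$64$, and kill torsion by topology. Two points deserve comment. First, your torsion argument is routed differently: the paper takes a torsion class, passes to the associated unramified cyclic cover of~$S$, contracts the preimages of the $(-2)$-curves to descend it to a cover of~$\bar S$, and then invokes Dimca's theorem that $\H_1(\bar S,\Z)=0$ for complete intersections with isolated singularities. You instead claim $\pi_1(S)=0$ directly. That works, but be careful with the citation: the classical Lefschetz hyperplane theorem applies to \emph{smooth} complete intersections, and $\bar S$ is singular; you need the version for (local) complete intersections due to Hamm--L\^e or Goresky--MacPherson to get $\pi_1(\bar S)=0$, after which collapsing the $48$ disjoint simply connected exceptional curves indeed leaves $\pi_1$ unchanged. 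With the right reference this is a legitimate alternative to the paper's covering-space transfer.

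Second, there is a genuine slip in your computational recipe: for a curve $C$ on~$\bar S$ passing with a single smooth branch through an $A_1$ node~$p$, the (Mumford, $\Q$-valued) pullback is $b^*C = \tilde C + \tfrac{1}{2}E_p$, not $\tilde C + E_p$; a Weil divisor through a node is not Cartier there, and your coefficient-$1$ formula would give $\tilde C \cdot E_p = 2$ instead of the correct value~$1$, corrupting every Gram entry involving incidences with nodes. This is not a vacuous worry: some curves in~$\cG$ must pass through nodes, since otherwise the $48$ exceptional classes would split off an orthogonal summand $\langle -2\rangle^{48}$ and the discriminant of the generated lattice would be divisible by~$2^{48}$, contradicting the value $-2^{28}$ computed in the paper. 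Your adjunction-based self-intersections ($\tilde C^2 = 2p_a - 2 - \deg C = -4$ for the conics and genus-one quartics) are correct and agree with the paper, so the fix is purely in the cross terms; once the coefficient is corrected to $\tfrac12$, your plan reproduces the paper's computation, which it likewise delegates to a machine check that the rank is~$64$.
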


\begin{proof}
  Since $h^1(S, \cO_S) = 0$, the Picard group is finitely generated.
  Moreover, each torsion class in the Picard group determines an
  unramified connected cyclic covering $\pi \colon \tilde S \to S$ that is
  trivial if and only if the class in the Picard group is trivial.  Any
  such cover induces a similar cover on~$\bar{S}$: the inverse image
  under~$\pi$ of each $(-2)$-curve in~$S$ consists of a disjoint union
  of $(-2)$-curves in~$\tilde S$, which can be contracted to rational double
  points, thus obtaining an unramified connected cyclic cover of~$\bar{S}$.
  By \cite{Di}*{Thm.~2.1} we have $\H_1(\bar{S}, \Z) = 0$, and hence the cyclic
  covering is trivial. We obtain that all torsion classes in $\Pic S$ are trivial,
  and $\Pic S$ is torsion free.

  From the calculation of the Hodge numbers of~$S$, we deduce that the rank
  of $\Pic S$ is at most~64.  Thus, to conclude, it suffices to show that the
  intersection matrix of the curves in~$\cG$ has rank~64.  By the adjunction
  formula, we easily see that the self-intersection of the divisor class of
  a conic or a genus one curve in our list is~$-4$ (in both cases, we mean the
  strict transform
  in~$S$ of the corresponding curve).  The evaluation of the remaining pairwise
  intersection numbers of the curves above is straightforward, tedious, and
  preferably done by computer.  We check using a computer that the rank of the
  intersection matrix of the 140~curves in~$\cG$ is~64, concluding the proof of
  the proposition.
\end{proof}

\begin{Theorem} \label{Thm2Gp}
  The Picard group of~$S$ is a free abelian group of rank~64,
  and it is generated by the classes of curves in $\cG$.

  The discriminant of the intersection pairing on~$\Pic S$ is~$-2^{28}$.
\end{Theorem}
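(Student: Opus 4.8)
The plan is to prove that the finite-index subgroup $L \subseteq \Pic S$ generated by the curves in $\cG$ is all of $\Pic S$. By Proposition~\ref{PropRank}, $L$ has rank $64$ and finite index in $\Pic S$, so first I would fix a basis of $L$ consisting of $64$ of the curves in $\cG$, express the remaining curves in this basis, and verify by computer that the Gram determinant equals $-2^{28}$, so that $\operatorname{disc}(L) = -2^{28}$. Since $\operatorname{disc}(L) = [\Pic S : L]^2 \operatorname{disc}(\Pic S)$ and $2$ is the only prime dividing $\operatorname{disc}(L)$, the index $[\Pic S : L]$ is a power of $2$. In particular it suffices to prove that $\Pic S / L$ has no $2$-torsion, as this yields $L = \Pic S$ and $\operatorname{disc}(\Pic S) = -2^{28}$ simultaneously.

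To set up the saturation argument, observe that $\Pic S$ is an even lattice, since every self-intersection of a curve in $\cG$ is $-2$ or $-4$ and $K_S^2 = 16$; hence $L$ is even as well, and $L \subseteq \Pic S \subseteq L^\ast$. Write $h = K_S$ for the hyperplane class. It lies in $L$ because the total transform of a hyperplane section, for instance $b^\ast(\bar S \cap \{c=0\})$, is an explicit integral combination of the eight conics in $c = 0$ and of the exceptional curves through which they pass, all of which lie in $\cG$. By adjunction $h \cdot C \in 2\Z$ for every $C \in \cG$, so $\tfrac12 h \in L^\ast$; moreover $\tfrac12 h \notin L$ because $h$ is primitive in $\Pic S$ by Lemma~\ref{dueca}, and the discriminant quadratic form takes the value $(\tfrac12 h)^2 = 4 \equiv 0 \pmod{2\Z}$ on it. Thus the class $\omega := \tfrac12 h + L$ is a nonzero, isotropic element of the $2$-torsion subgroup $D[2]$ of the discriminant group $D = L^\ast/L$, and it is fixed by the combined group $\Gamma$ generated by $\Aut(S)$ and the image of $\Gal(\Qbar/\Q)$ in $\Aut(\Pic S)$, since $h = K_S$ is both $\Aut(S)$-invariant and defined over $\Q$.

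The core of the argument is the following reduction. Since $\cG$ is permuted by $\Aut(S)$ and by $\Gal(\Qbar/\Q)$, the lattice $L$ is $\Gamma$-stable, the Galois part of the action factoring through $\Gal(\Q(i,\sqrt2)/\Q)$ because all curves in $\cG$ are defined over $\Q(i,\sqrt2)$. By the theory of overlattices, even overlattices $L \subseteq M \subseteq L^\ast$ correspond to subgroups of $D$ that are isotropic for the discriminant quadratic form; as $\Pic S$ is even, the $\Gamma$-invariant subgroup $\Pic S / L \subseteq D$ is isotropic. Hence, if $L \subsetneq \Pic S$, then $(\Pic S / L)[2]$ is a nonzero, isotropic, $\Gamma$-invariant subgroup of $D[2]$. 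I would compute the matrices of a generating set of $\Gamma$ on $L$ — each automorphism in $\Aut(S)$ and each of the two Galois generators acts as an explicit signed permutation of the curves in $\cG$ — reduce them modulo $2$, and so describe $D[2]$ as an $\F_2[\Gamma]$-module. The key claim, which I expect to be the main obstacle, is that the \emph{only} nonzero $\Gamma$-invariant isotropic subspace of $D[2]$ is the line $\langle \omega \rangle$. It is exactly here that combining the actions of $\Aut(S)$ and of the Galois group is essential: either action alone leaves further invariant isotropic subspaces, whereas their joint action pins everything down to $\langle \omega \rangle$.

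Granting the claim, the proof concludes at once. If $L \neq \Pic S$, then $(\Pic S / L)[2]$ is a nonzero $\Gamma$-invariant isotropic subgroup of $D[2]$, hence equals $\langle \omega \rangle$; but $(\Pic S / L)[2] = \langle \omega \rangle$ says precisely that $\tfrac12 h \in \Pic S$, i.e. that $h$ is divisible by $2$ in $\Pic S$, contradicting Lemma~\ref{dueca}. Therefore $L = \Pic S$, the classes of the curves in $\cG$ generate $\Pic S$, and $\operatorname{disc}(\Pic S) = \operatorname{disc}(L) = -2^{28}$, as claimed.
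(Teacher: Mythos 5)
Your overall architecture matches the paper's: reduce saturation at the prime $2$ to the non-divisibility of $K_S$ by $2$ (Lemma~\ref{dueca}), exploiting the invariance of $L$ under the group $\tilde G$ generated by $\Aut(S)$ and $\Gal(\Q(i,\sqrt{2})/\Q)$, with a computer calculation at the decisive step; your preparatory points (the discriminant computation, $h = K_S \in L$ via the total transform of the section $c=0$, $\frac{1}{2}h \in L^\ast$ by adjunction parity, and the final contradiction with Lemma~\ref{dueca}) are correct. But there are two genuine gaps. First, your assertion that $\Pic S$ is even is circular: the even self-intersections of the curves in $\cG$ show only that $L$ is even, and evenness of an overlattice does not follow from evenness of a finite-index sublattice --- indeed, the paper obtains the evenness of $\Pic S$ only as a Corollary \emph{after} this theorem. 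Since the correspondence between even overlattices and isotropic subgroups of $D = L^\ast/L$ is exactly what you use to place $(\Pic S/L)[2]$ among the isotropic $\tilde G$-invariant subspaces of $D[2]$, your framing rests on an unproven premise.

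Second, and more seriously, your key claim --- that $\langle \omega \rangle$ is the \emph{only} nonzero $\tilde G$-invariant isotropic subspace of $D[2]$ --- is left entirely to an unperformed computation, is stronger than needed, and may well be false: nothing rules out invariant isotropic subspaces of dimension at least two containing $\omega$, and your concluding step only needs \emph{containment}, since $(\Pic S/L)[2] \supseteq \langle \omega \rangle$ already forces $\frac{1}{2}h \in \Pic S$. The paper supplies precisely the idea that closes this hole: take a Sylow $2$-subgroup $\tilde G_2 \subseteq \tilde G$; any representation of a $2$-group on a nonzero $\F_2$-vector space has a nonzero fixed vector, so every nonzero $\tilde G$-invariant subspace of $L/2L$ meets $(L/2L)^{\tilde G_2}$, and a single, much smaller computation shows this fixed space is one-dimensional, spanned by $K_S + 2L$. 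Via the embedding $D[2] \simeq (2L^\ast \cap L)/2L \subseteq L/2L$, under which $\omega$ maps to $K_S + 2L$, this yields the needed containment with no isotropy hypothesis, no evenness of $\Pic S$, and no enumeration of invariant subspaces; in fact the paper dispenses with the overlattice formalism altogether, working directly with the kernel $L_2$ of $L/2L \to \Pic S/2\Pic S$. Your proposal is thus repairable by replacing the uniqueness claim with the fixed-subspace computation, but as written it hinges on an unverified and likely too-strong assertion, and it misses the Sylow fixed-point reduction that is the paper's central technical device.
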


\begin{proof}
  By Proposition~\ref{PropRank}, the Picard group is free abelian of
  rank~64, and the lattice~$L$ generated by the classes of the curves in~$\cG$
  is of finite index in the Picard group. It remains to show that $L$
  is already the full Picard group.

  The computation of the intersection
  matrix shows that the discriminant of~$L$ is~$-2^{28}$.
  Thus the cokernel of the inclusion $L \to \Pic S$ is a finite abelian 2-group.
  Hence, to prove the equality $L = \Pic S$ it suffices to prove that the natural
  morphism $L/2L \to \Pic S/2 \Pic S$ is injective;
  denote by $L_2$ its kernel.
  The Galois group $\Gal(\Q(i,\sqrt{2})/\Q)$ acts on the lattice~$\Pic S$,
  since the divisors in~$\cG$ are defined over~$\Q(i,\sqrt{2})$, while~$S$
  is defined over~$\Q$.  Denote by $\tilde G$ the group of automorphisms
  of~$\Pic S$ generated by~$G$ and~$\Gal(\Q(i,\sqrt{2})/\Q)$.
  The action of $\tilde G$ on~$\Pic S$ induces an action of $\tilde G$ on~$L$,
  since $\cG$ is stable under the action of both $G$
  and~$\Gal(\Q(i,\sqrt{2})/\Q)$, and hence there is an action of~$\tilde G$ on~$L/2L$.
  The $\F_2$-vector space $L_2 \subset L/2L$ is invariant under $\tilde G$
  (it is the kernel of a $\tilde G$-equivariant homomorphism).
  Let $\tilde G_2 \subset \tilde G$ be a Sylow 2-subgroup.
  Any representation of a 2-group on an $\F_2$-vector space of positive dimension
  has a non-trivial fixed subspace.
  In particular, there is a non-trivial $\tilde G_2$-invariant subspace
  in~$L_2$.  Using a computer we check that the subspace of $(L/2L)^{\tilde G_2}$
  coming from classes in~$L$ that have even intersection with~$L$ has dimension~1,
  spanned by the reduction modulo~2 of the canonical class (note that the canonical
  class is fixed by the action of~$\tilde G$ on~$\Pic S$).
  We deduce that if $L_2 \neq 0$, then the canonical divisor class is divisible
  by~two in $\Pic S$.  By Lemma~\ref{dueca} we know that this is not the case.
  It follows that $\Pic S$ is generated by the classes of the curves in~$\cG$.
\end{proof}

\begin{Corollary}
  The intersection pairing on $\Pic S$ is even; in particular, there are no curves of
  odd degree on the surface~$S$.
\end{Corollary}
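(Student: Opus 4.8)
The plan is to read off both assertions of the corollary directly from Theorem~\ref{Thm2Gp}, without any further hard work. The key realization is that the two statements are essentially the same fact expressed in complementary ways: the intersection pairing being even means every class $D \in \Pic S$ satisfies $D^2 \equiv 0 \pmod 2$, while a curve of odd degree would violate this (its degree with respect to the canonical polarization being $D \cdot K_S$, whose parity is controlled by the pairing).

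First I would establish that the pairing is even by checking it on a set of generators. By Theorem~\ref{Thm2Gp}, the group $\Pic S$ is generated by the classes of the $140$ curves in $\cG$. For a symmetric bilinear pairing, evenness of $D^2$ for all $D$ follows once one knows $D_i^2 \equiv 0 \pmod 2$ for every generator $D_i$ and $D_i \cdot D_j \in \Z$ for all pairs (the latter being automatic, as intersection numbers are integers). Indeed, writing $D = \sum_i n_i D_i$, one has $D^2 = \sum_i n_i^2 D_i^2 + 2\sum_{i<j} n_i n_j\, D_i \cdot D_j \equiv \sum_i n_i^2 D_i^2 \equiv \sum_i n_i D_i^2 \pmod 2$, so if each $D_i^2$ is even then $D^2$ is even. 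Thus it suffices to verify that every curve in $\cG$ has even self-intersection. This is immediate from the computation already performed in the proof of Proposition~\ref{PropRank}: the 48 exceptional $(-2)$-curves have self-intersection $-2$, and each conic or genus-one curve in the list has self-intersection $-4$ by the adjunction-formula computation recorded there. All of these are even, so the pairing is even.

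Second I would deduce the statement about odd-degree curves. The degree of a curve $C$ on $S$ is its intersection with the hyperplane class, which by the remark preceding Lemma~\ref{dueca} equals $C \cdot K_S$, the pairing of $[C]$ with the canonical class. Since the pairing is even, the class $K_S$ in particular satisfies $K_S^2 \equiv 0$, and more usefully $C \cdot K_S \equiv C^2 \pmod 2$ would hold if $K_S \equiv C$, which is not what I want; instead the cleaner route is to observe that evenness of the pairing means the associated mod-$2$ quadratic form $D \mapsto D^2 \bmod 2$ vanishes identically, which forces the linear form $D \mapsto D \cdot K_S \bmod 2$ to vanish as well, because $D \cdot K_S \equiv D^2 + K_S \cdot D \cdot 0$. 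To make this rigorous and avoid sleight of hand, I would simply note $C \cdot K_S \equiv (C+K_S)^2 - C^2 - K_S^2 \equiv 0 \pmod 2$ using evenness of all three self-intersections on the right. Hence every curve $C$ on $S$ has even degree $C \cdot K_S$, so there are no curves of odd degree.

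I do not expect any genuine obstacle here: the entire content has already been extracted in Theorem~\ref{Thm2Gp} and Proposition~\ref{PropRank}. The only point requiring a moment of care is the reduction of evenness of the whole pairing to evenness on generators, which rests on the elementary parity identity for $(\sum n_i D_i)^2$ displayed above; once that is in hand, both claims are formal. If anything, the subtle part is purely expository—making sure the reader sees that "even pairing" and "no odd-degree curves" are two faces of the same mod-$2$ statement—rather than mathematical.
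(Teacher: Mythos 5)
Your first half is correct and coincides with the paper's argument: by Theorem~\ref{Thm2Gp} the classes in $\cG$ generate $\Pic S$, their self-intersections are $-2$ (exceptional curves) and $-4$ (conics and genus-one curves), and your parity identity for $\bigl(\sum_i n_i D_i\bigr)^2$ correctly propagates evenness from generators to the whole lattice. So the first assertion of the corollary is fine.

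The second half, however, contains a genuine gap: the key step is circular. The polarization identity gives $(C+K_S)^2 - C^2 - K_S^2 = 2\,C\cdot K_S$, which is even for trivial reasons, so your claimed congruence $C\cdot K_S \equiv (C+K_S)^2 - C^2 - K_S^2 \pmod{2}$ asserts $C\cdot K_S \equiv 2\,C\cdot K_S \pmod{2}$, i.e., exactly the conclusion you are trying to prove. More fundamentally, the statement you lean on --- that evenness of the quadratic form forces the linear form $D \mapsto D\cdot K_S \bmod 2$ to vanish --- is false in a general even lattice: in the hyperbolic plane with basis $e,f$ one has $e^2 = f^2 = 0$ (even) yet $e\cdot f = 1$. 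Bilinearity alone cannot do this; you need a geometric input, namely that $K_S$ is a \emph{characteristic} element of the intersection lattice. This is precisely what the paper invokes via the adjunction formula: for an integral curve $C$ on the smooth surface $S$, $C^2 + C\cdot K_S = 2p_a(C) - 2$ is even, hence $\deg C = C\cdot K_S \equiv C^2 \pmod{2}$, and evenness of the pairing then forces $C\cdot K_S$ to be even. (The same parity relation holds for arbitrary divisor classes by Riemann--Roch.) Ironically, you wrote down the correct congruence $C\cdot K_S \equiv C^2 \pmod{2}$ in mid-argument and discarded it as requiring ``$K_S \equiv C$''; it requires no such hypothesis --- it is adjunction, and inserting it repairs your proof and recovers the paper's.
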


\begin{proof}
  By Theorem~\ref{Thm2Gp} the Picard group of~$S$ is generated by the elements
  of~$\cG$; since all the elements of~$\cG$ have even self-intersection, we deduce
  that the pairing on $\Pic S$ is even.  By the adjunction formula, the degree of any
  curve on~$S$ has the same parity as its self-intersection; since the pairing on
  $\Pic S$ is even, we conclude that every curve on~$S$ has even degree.
  (This latter statement would also follow from the fact that all curves in~$\cG$
  have even degree.)
\end{proof}

As a consequence, also the surface~$\bar{S}$ contains no curves of odd degree; van~Luijk had
already shown that there are no lines on~$\bar{S}$, see~\cite{vanLuijk}*{Prop.~3.4.11}.

Using the explicitly known structure of $\Pic S$ as a Galois module, we obtain
the following result.

\begin{Theorem}
  The algebraic part of the Brauer group of~$S$ is the isomorphic image
  of the Brauer group of~$\Q$ in the Brauer group of~$S$.
\end{Theorem}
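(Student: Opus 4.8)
The \emph{algebraic} Brauer group is $\operatorname{Br}_1(S) := \ker\bigl(\operatorname{Br}(S) \to \operatorname{Br}(S_{\Qbar})\bigr)$, and the constant classes form the image of the natural map $\operatorname{Br}(\Q) \to \operatorname{Br}(S)$, which lands in $\operatorname{Br}_1(S)$ since $\operatorname{Br}(\Qbar) = 0$. The plan is to play off this image against the fundamental exact sequence arising from the low-degree terms of the Hochschild--Serre spectral sequence $\H^p(\Gal(\Qbar/\Q), \H^q(S_{\Qbar}, \mathbb{G}_m)) \Rightarrow \H^{p+q}(S, \mathbb{G}_m)$. Writing $\Gamma = \Gal(\Qbar/\Q)$ and $P = \Pic S_{\Qbar}$ for the geometric Picard group determined in Theorem~\ref{Thm2Gp}, and using $\H^1(\Gamma, \Qbar^\times) = 0$ (Hilbert 90) together with $\H^2(\Gamma, \Qbar^\times) = \operatorname{Br}(\Q)$, this sequence reads
\[
0 \to \Pic S \to P^{\Gamma} \to \operatorname{Br}(\Q) \to \operatorname{Br}_1(S) \to \H^1(\Gamma, P) \to \H^3(\Gamma, \Qbar^\times),
\]
where $\Pic S$ now denotes the Picard group of $S$ as a $\Q$-variety. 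The theorem amounts to the assertion that the arrow $\operatorname{Br}(\Q) \to \operatorname{Br}_1(S)$ is an isomorphism.

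First I would exhibit a smooth rational point on $S$, which splits $\operatorname{Br}(\Q) \to \operatorname{Br}(S)$ by evaluation at the point, hence makes it injective, and simultaneously forces the preceding map $P^{\Gamma} \to \operatorname{Br}(\Q)$ to vanish. Concretely, the point $[0,3,4,5,4,3,5]$ satisfies the equations~\eqref{esse} and has $b_1 b_2 b_3 \neq 0$, so by the Jacobian criterion established in the first lemma it is a smooth point of $\bar S$ not lying on the singular locus; since the resolution $b$ is an isomorphism away from the $48$ singular points, it lifts to a rational point of $S$. Consequently $\Pic S = P^{\Gamma}$, the map $\operatorname{Br}(\Q) \hookrightarrow \operatorname{Br}_1(S)$ is injective, and from the displayed sequence the cokernel of this injection embeds into $\H^1(\Gamma, P)$. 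It therefore suffices to prove that $\H^1(\Gamma, P) = 0$.

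To compute $\H^1(\Gamma, P)$, recall that by Theorem~\ref{Thm2Gp} the lattice $P$ is generated by the curves in $\cG$, all of which are defined over $K := \Q(i,\sqrt 2)$; hence the $\Gamma$-action on $P$ factors through $\bar\Gamma := \Gal(K/\Q) \cong (\Z/2\Z)^2$. The inflation--restriction sequence yields
\[
0 \to \H^1(\bar\Gamma, P) \to \H^1(\Gamma, P) \to \H^1(\Gal(\Qbar/K), P)^{\bar\Gamma},
\]
and the rightmost group vanishes because $\Gal(\Qbar/K)$ acts trivially on the finitely generated \emph{free} abelian group $P$ and every continuous homomorphism from a profinite group to $\Z$ is trivial. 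Thus $\H^1(\Gamma, P) = \H^1(\bar\Gamma, P)$, a cohomology group of the finite group $(\Z/2\Z)^2$ acting on the explicit rank-$64$ lattice.

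The main remaining work, which I would carry out by computer using the permutation action of the two generators of $\bar\Gamma$ on $\cG$ (together with the known relations in $P$), is to verify that $\H^1(\bar\Gamma, P)$ vanishes. This is the step I expect to be the principal obstacle: unlike the formal reductions above, it depends on the fine arithmetic structure of the Galois module rather than on generalities, and the module is not quite a permutation module, so the vanishing is not automatic. Once $\H^1(\bar\Gamma, P) = 0$ is confirmed, the displayed long exact sequence gives $\operatorname{Br}_1(S) = \operatorname{Br}(\Q)$, with the isomorphism realized by the natural map, completing the proof.
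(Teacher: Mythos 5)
Your proposal is correct and takes essentially the same route as the paper: the paper likewise identifies the cokernel of $\operatorname{Br}(\Q) \to \operatorname{Br}_1(S)$ with $\H^1(\Q, \Pic S_{\Qbar})$, uses torsion-freeness of the Picard group and the fact that the generators in~$\cG$ are defined over $\Q(i,\sqrt{2})$ to reduce to $\H^1(\Gal(\Q(i,\sqrt{2})/\Q), \Pic S)$, and checks the vanishing of that group by a Magma computation. Your additional details---deriving the exact sequence from Hochschild--Serre and exhibiting the degenerate-cuboid point $[0,3,4,5,4,3,5]$ to split $\operatorname{Br}(\Q) \to \operatorname{Br}(S)$---correctly make explicit what the paper's appeal to the ``well-known'' statement leaves implicit.
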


\begin{proof}
  It is well-known that the cokernel of the inclusion of the Brauer group of~$\Q$
  in the algebraic part of the Brauer group of~$S$ is isomorphic to the Galois
  cohomology group $\H^1(\Q, \Pic S)$. Since $\Pic S$ is torsion free and
  we found a set of generators of the Picard group of~$S$ defined
  over $\Q(i, \sqrt{2})$, we have
  \[ \H^1(\Q, \Pic S) = \H^1(\Gal(\Q(i, \sqrt{2})/\Q), \Pic S) . \]
  A computation in Magma~\cite{Magma} shows that the latter cohomology group vanishes,
  establishing the result.
\end{proof}

This means that there is no `algebraic Brauer-Manin obstruction' to
weak approximation on~$S$. It would be interesting to investigate the
transcendental quotient of the Brauer group.


\section{The cuboid surface as a modular surface} \label{SectMod}

In the paper~\cite{Beau}, Beauville gives a construction of~$\bar{S}$
as a quotient of a product of curves. We consider the curve~$X$ in~$\BP^4$ given
by the equations
\[ u^2 = 2 x y, \quad v^2 = x^2 - y^2, \quad w^2 = x^2 + y^2 \,. \]
This is a smooth curve of genus~5, which is a model of the modular curve~$X(8)$
whose non-cuspidal points correspond to elliptic curves~$E$ together with a
symplectic isomorphism $\Z/8\Z \times \mu_8 \to E[8]$. The geometric automorphism
group of~$X$ is $\PSL(2,\Z/8\Z)$, a group of order~$192 = 3 \cdot 2^6$.
The canonical epimorphism $\PSL(2, \Z/8\Z) \to \PSL(2, \Z/4\Z) \cong \mathfrak{S}_4$
has kernel~$G_0$ isomorphic to $(\Z/2\Z)^3$; in terms of our model, the automorphisms
in the kernel are given by sign changes of~$u$, $v$ and~$w$.

We consider the diagonal action of~$G_0$ on $X \times X$. Denoting the
coordinates on the first factor by $u_1, v_1, w_1, x_1, y_1$ and on the second
factor by $u_2, v_2, w_2, x_2, y_2$, invariants of the action are given by
$U = u_1 u_2$, $V = v_1 v_2$, $W = w_1 w_2$, $X = x_1 x_2$, $Y = y_1 y_2$
and $T = x_1 y_2$, $Z = x_2 y_1$. We obtain the relations
\[ X Y = T Z, \quad U^2 = 4 X Y, \quad V^2 = X^2 + Y^2 - T^2 - Z^2, \quad
   W^2 = X^2 + Y^2 + T^2 + Z^2 \,.
\]
Setting
\begin{gather*}
  U = 2 b_1, \quad V = 2 b_2, \quad W = 2 b_3, \\
  X = a_1 + c, \quad Y = -a_1 + c, \quad T = a_2 + i a_3, \quad Z = a_2 - i a_3 \,,
\end{gather*}
this gives the equations
\begin{align*}
  a_1^2 + a_2^2 + a_3^2 &= c^2 \\
  a_1^2 + b_1^2 &= c^2 \\
  a_1^2 - a_2^2 + a_3^2 + c^2 &= 2 b_2^2 \\
  a_1^2 + a_2^2 - a_3^2 + c^2 &= 2 b_3^2 \,,
\end{align*}
which are equivalent to the equations defining~$\bar{S}$. Note that the isomorphism
$(X \times X)/G_0 \stackrel{\cong}{\to} \bar{S}$ is defined over~$\Q(i)$
and not over~$\Q$.

We have $X/G_0 \cong X(4) \cong \BP^1$. The action of $(G_0 \times G_0)/G_0 \cong G_0$
(with $G_0$ embedded diagonally) on the quotient $(X \times X)/G_0 \cong \bar{S}$
is via sign changes on $u_1 u_2 = 2 b_1$, $v_1 v_2 = 2 b_2$ and $w_1 w_2 = 2 b_3$.
Therefore the quotient $\bar{S}/G_0$ is obtained by projecting away from the linear
subspace spanned by these coordinates; it is the quadric
\[ Q \colon a_1^2 + a_2^2 + a_3^2 = c^2 \]
in~$\BP^3$, which splits over~$\Q(i)$, so that $Q \cong \BP^1 \times \BP^1$.

A point on $X \times X = X(8) \times X(8)$ corresponds to a pair of elliptic
curves with full level-8 structure. Since~$\bar{S}$ maps to $X(4) \times X(4)$,
the points on~$\bar{S}$ give rise to a pair of elliptic curves with full level-4
structure; dividing by the diagonal action of~$G_0$ corresponds to only keeping
the induced isomorphism between the 8-torsion subgroups. Therefore, a point on~$\bar{S}$
corresponds to a pair of elliptic curves~$E$ and~$E'$ with isomorphisms $\phi$
and~$\psi$ as in the following diagram:
\[ \Z/4\Z \times \mu_4 \stackrel{\phi}{\to} E[4] \subset E[8] \stackrel{\psi}{\to} E'[8] \,. \]
This holds for all fields containing a square root of~$-1$. To find out what the
correct moduli problem is over~$\Q$, we observe that the sign change $a_3 \mapsto -a_3$
on~$\bar{S}$ lifts to the automorphism of $X \times X$ that switches the two factors.
This implies that over~$\Q$,~$\bar{S}$ is isomorphic to the quotient of
$R_{\Q(i)/\Q} X_{\Q(i)}$, the Weil restriction of scalars down to~$\Q$ of $X$
base-changed to~$\Q(i)$, by~$G_0$. The quadric~$Q$ is then $R_{\Q(i)/\Q}(X(4)_{\Q(i)})$.
A rational point on~$\bar{S}$ then corresponds to an elliptic curve~$E$ over~$\Q(i)$
with a basis $P_1, P_2$ of~$E[4]$ such that there is an isomorphism
$\psi \colon E[8] \to \bar{E}[8]$ with $\psi(P_1) = \bar{P}_1$ and $\psi(P_2) = -\bar{P}_2$,
where the bar denotes the action of the nontrivial automorphism of~$\Q(i)$.

Let $G_0^+$ be the subgroup of~$G_0$ whose elements perform sign changes on
an even number of variables. Then $X/G_0^+$ is the genus~2 curve
\[ C \colon y^2 = 2(x^5 - x) \,. \]
The quotient $Y = (X \times X)/G_0^+$ is a smooth surface that is a double cover
of~$\bar{S}$, ramified exactly in the 48~singularities of~$\bar{S}$. On the other
hand, $Z = \bar{S}/G_0^+$ has $C \times C$ as a double cover (over~$\Q(i)$).
Over~$\Q$ we find $R_{\Q(i)/\Q} C_{\Q(i)}$ as a double cover of~$Z$.
The surface~$Z$ in turn is a double cover of the quadric~$Q$, branched over
a divisor of type~$(6,6)$, so~$Z$ is still of general type.


\section{Fibrations in curves of genus 5} \label{SectFib}

We have observed earlier that there are exactly six quadrics of rank~3 that
contain~$\bar{S}$. In a similar way, one easily sees that there are exactly
eleven quadrics of rank~4 containing~$\bar{S}$. These are diagonal quadrics
involving all possible subsets of size~4 of the variables that do not contain
one of the sets of three variables corresponding to the rank~3 quadrics.
Explicitly, the eleven quadrics are given by
{\renewcommand{\arraystretch}{1.2}
\[ \begin{array}{*{13}{@{\,\,}c}@{{}=0}}
      a_1^2 &+& a_2^2 &+& a_3^2 & &       & &       & &       &-& c^2 \\
      a_1^2 &-& a_2^2 & &       &+& b_1^2 &-& b_2^2 & &       & &     \\
            & &       & & a_3^2 &-& b_1^2 &-& b_2^2 & &       &+& c^2 \\
            & & a_2^2 &-& a_3^2 & &       &+& b_2^2 &-& b_3^2 & &     \\
      a_1^2 & &       & &       & &       &-& b_2^2 &-& b_3^2 &+& c^2 \\
     -a_1^2 & &       &+& a_3^2 &-& b_1^2 & &       &+& b_3^2 & &     \\
            & & a_2^2 & &       &-& b_1^2 & &       &-& b_3^2 &+& c^2 \\
     2a_1^2 & &       & &       &+& b_1^2 &-& b_2^2 &-& b_3^2 & &     \\
           & & 2a_2^2 & &       &-& b_1^2 &+& b_2^2 &-& b_3^2 & &     \\
           & &       & & 2a_3^2 &-& b_1^2 &-& b_2^2 &+& b_3^2 & &     \\
            & &       & &       & & b_1^2 &+& b_2^2 &+& b_3^2 &-& 2c^2
   \end{array}
\]
}
They can be associated in the given order to the tetrahedron in~\eqref{tetrahedron},
its six edges, and its four vertices. This partition corresponds to the orbits under the
automorphism group~$G$.

Projecting away from the space spanned by the remaining variables, each of
these gives rise to a rational map from~$\bar{S}$ onto a quadric in~$\BP^3$.
Since~$\bar{S}$ does not meet the first and the last four of these planes,
the corresponding rational maps are morphisms.
The first of these is the morphism $\bar{S} \to Q$, where~$Q$ is the quadric
mentioned in Section~\ref{SectMod}. On the other hand,~$\bar{S}$ does meet
the planes corresponding to the remaining six rank~4 quadrics (in eight singular
points each), so to obtain a morphism, one has to blow up these points.

Over a field that splits the quadric, we can post-compose with one of the projections
to~$\BP^1$. The fibers of the resulting map $\bar{S} \to \BP^1$ are curves
of arithmetic genus~5 and degree~8; the generic fiber is a smooth irreducible
curve canonically embedded into the~$\BP^4$ it spans. In this way, each of the
eleven rank~4 quadrics gives rise to two complementary (in the sense that the
sum of the classes of their fibers is the hyperplane section) fibrations of~$\bar{S}$
in curves of genus~5. The first pair of these, which are defined over~$\Q(i)$,
are isotrivial and correspond to the fibrations induced from the product
$X \times X$ in Section~\ref{SectMod}. The other fibrations are not isotrivial.
(We can check this by considering one of the genus~1 quotient fibrations
and verifying that the $j$-invariant is not constant.)
Since the quadrics are invariant under the
sign changes of the three variables not occurring in them, the corresponding
group $(\Z/2\Z)^3$ acts on all the fibers. Quotienting out by subgroups,
we obtain surfaces fibered in hyperelliptic (two simultaneous sign changes)
or non-hyperelliptic (simultaneous sign change of all three variables) curves
of genus~3 or in curves of genus~2 (even number of sign changes).
We also find quotients fibered in genus~1 curves (one sign change).
Dividing by the full $(\Z/2\Z)^3$, we get~$\bar{S}$ as a Galois cover of
$\BP^1 \times \BP^1$ (at least geometrically). For the first quadric, the
genus~2 fibration is~$Z$ in the notation of Section~\ref{SectMod}; we saw
there that this quotient is of general type, which implies that the hyperelliptic
genus~3 fibration quotients covering it are of general type as well.

There is a similar construction starting from a rank~3 quadric. Projecting away
from the space spanned by the four variables not occurring in the quadric,
we map~$\bar{S}$ onto a conic. Here we have eight (singular) points of~$\bar{S}$
in the base locus, so to get a morphism, we have to blow them up. Post-composing
with an isomorphism between the conic and~$\BP^1$ (this is always possible
already over~$\Q$), we obtain maps $\bar{S} \to \BP^1$ again. The fibers (in~$\bar{S}$)
are again generically smooth canonical curves of genus~5, but this time, we
get only one fibration from each rank~3 quadric. Here, twice the class of the
fiber (on~$S$) is the hyperplane section minus the eight exceptional curves coming
from blowing up the singularities in the base locus.
The genus~5 curves occurring as fibers are given by diagonal quadrics; we get
an action of~$(\Z/2\Z)^4$ on them. This leads to more quotients fibered into
curves of genus 1, 2 or~3.

In total, we obtain $6 + 2 \cdot 11 = 28$ such fibrations.

The (isotrivial) fibrations coming from the first rank~$4$ quadric above have six bad
fibers each (for the fibration given by $[a_1 + i a_2, a_3 + c]$, they are
above the points $0, \infty, \pm 1, \pm i$), which each consist of one of the genus~1
curves in~$\cG_2$ taken twice. The 12~curves in~$\cG_2$ each occur
exactly once in this way.

The fibrations coming from the next six quadrics of rank~4 also have six
bad fibers each (again over the same points, for $[a_1 + a_2, b_1 + b_2]$, say).
Two of them consist of two of the genus~1 curves in~$\cG_3$ joined by four of the
exceptional curves,
the other four bad fibers consist of two conics from~$\cG_1$, each with multiplicity~2
and joined by two exceptional curves. The curves in~$\cG_3$ show up exactly once
in this way, whereas each conic in~$\cG_1$ occurs in three of the fibrations.

The fibrations coming from the last four quadrics of rank~4 have six fibers each
splitting into two curves from~$\cG_3$ as above and twelve fibers that are hyperelliptic
curves of genus~3 with two nodes (at singular points of~$\bar{S}$). These curves
form an orbit of size~96 under~$\Aut(S)$ and are birational to the curve
\[ y^2 = 9 x^8 + 20 x^6 + 86 x^4 + 20 x^2 + 9 \,. \]

The fibrations coming from the six quadrics of rank~3 each have four fibers splitting
into four conics (from~$\cG_1$), four fibers splitting into two genus~1 curves
from~$\cG_3$ and two fibers splitting into two genus~1 curves from~$\cG_2$.

\medskip

We list equations for the fibers of some representatives of the four different
types of fibrations described above.

\begin{itemize}
  \item The rank~$3$ quadric $a_1^2 + b_1^2 = c^2$ with $t = (a_1 + c)/b_1$:
        \begin{align*}
          (t^2 + 1) a_1 - (t^2 - 1) c &= 0 \\
          (t^2 + 1) b_1 - 2 t c &= 0 \\
          (t^2 + 1)^2 a_2^2 - (t^2 + 1)^2 b_3^2 + (t^2 - 1)^2 c^2 &= 0 \\
          a_3^2 + b_3^2 - c^2 &= 0 \\
          (t^2 + 1)^2 b_2^2 + (t^2 + 1)^2 b_3^2 - 2 (t^4 + 1) c^2 &= 0
        \end{align*}
  \item One of the two fibrations associated to $a_1^2 + a_2^2 + a_3^2 = c^2$ \\
        with $t = (c + a_1)/(a_2 + i a_3) = (a_2 - i a_3)/(c - a_1)$:
        \begin{align*}
          (t^2 + 1) a_1 - 2 i t a_3 - (t^2 - 1) c &= 0 \\
          (t^2 + 1) a_2 + i (t^2 - 1) a_3 - 2 t c &= 0 \\
          a_3^2 + b_3^2 - c^2 &= 0 \\
          (t^2 + 1)^2 b_1^2 + 4 t^2 b_3^2 + 4 i t (t^2 - 1) a_3 c - 8 t^2 c^2 &= 0 \\
          (t^2 + 1)^2 b_2^2 + (t^2 - 1)^2 b_3^2 - 4 i t (t^2 - 1) a_3 c - 2 (t^2 - 1)^2 c^2 &= 0
        \end{align*}
  \item One of the two fibrations associated to $a_1^2 - a_2^2 + b_1^2 - b_2^2 = 0$ \\
        with $t = (a_1 + a_2)/(b_2 + b_1) = (b_2 - b_1)/(a_1 - a_2)$:
        \begin{align*}
          2 t a_1 - (t^2 - 1) b_1 - (t^2 + 1) b_2 &= 0 \\
          2 t a_2 - (t^2 + 1) b_1 - (t^2 - 1) b_2 &= 0 \\
          2 (t^4 - 1) b_1 b_2 - (t^2 - 1)^2 b_3^2 + 2 (t^4 + 1) c^2 &= 0 \\
          b_1^2 + b_2^2 + b_3^2 - 2 c^2 &= 0 \\
          a_3^2 + b_3^2 - c^2 &= 0
        \end{align*}
  \item Finally, one of the two fibrations associated to $b_1^2 + b_2^2 + b_3^2 = 2 c^2$ \\
        with $t = (b_1 + i b_2)/(\sqrt{2} c + b_3) = (\sqrt{2} c - b_3)/(b_1 - i b_2)$:
        \begin{align*}
          2 t b_1 - (t^2 - 1) b_3 - \sqrt{2} (t^2 + 1) c &= 0 \\
          2 t b_2 + i (t^2 + 1) b_3 + i \sqrt{2} (t^2 - 1) c &= 0 \\
          4 t^2 a_1^2 + (t^2 - 1)^2 b_3^2 + 2 \sqrt{2} (t^4 - 1) b_3 c + 2 (t^4 + 1) c^2 &= 0 \\
          4 t^2 a_2^2 - (t^2 + 1)^2 b_3^2 - 2 \sqrt{2} (t^4 - 1) b_3 c
            - 2 (t^4 + 1) c^2 &= 0 \\
          a_3^2 + b_3^2 - c^2 &= 0
        \end{align*}
\end{itemize}

The genus~$2$ quotient of the third fibration (which is defined over~$\Q$)
is birational to
\begin{align*}
  y^2 &= -2 (t^4 - 1) \bigl((t^8 - 1) x^6 + 4 (3 t^8 + 2 t^4 + 3) x^5 + 30 (t^8 - 1) x^4 \\
      & \qquad{} + 8 (5 t^8 - 2 t^4 + 5) x^3 + 30 (t^8 - 1) x^2 + 4 (3 t^8 + 2 t^4 + 3) x + (t^8 - 1)\bigr)
\end{align*}
(whose right hand side splits completely over~$\Q(i)$).


\section{K3 quotients} \label{SectK3}

Projecting away from one of the coordinate points in~$\BP^6$, which is equivalent
to taking the quotient under the involution that changes the sign of the corresponding
coordinate, we obtain various K3~surfaces given as intersections of three quadrics
in~$\BP^5$ that are doubly covered by~$\bar{S}$. We will denote them by
$\bar{K}_{a_1}, \ldots, \bar{K}_c$ and their minimal desingularizations by
$K_{a_1}, \ldots, K_c$, indexed by the coordinate we `forget'. The action of $\Aut(\bar{S})$
implies that the three $K_{b_j}$ are isomorphic over~$\Q$ and that the three $K_{a_j}$
are isomorphic over~$\Q$ and isomorphic to~$K_c$ over~$\Q(i)$.

We will consider $K_c$ in the following. Its singular model~$\bar{K}_c$ is defined by
\[ a_1^2 + a_2^2 = b_3^2, \quad a_1^2 + a_3^2 = b_2^2, \quad a_2^2 + a_3^2 = b_1^2 \,. \]
There are 12~nodes on this model (whose preimages are 24 of the 48~nodes on~$\bar{S}$;
the other 24~nodes disappear under the quotient map). The cover is branched
over the union of eight conics given by $a_1^2 + a_2^2 + a_3^2 = 0$.
This K3~surface, which parametrizes `Euler bricks', has an extra involution
(already known to Euler) given by
\[ [a_1, a_2, a_3, b_1, b_2, b_3]
     \longmapsto [a_2 a_3, a_1 a_3, a_1 a_2, a_1 b_1, a_2 b_2, a_3 b_3] \,.
\]
This involution of~$K_c$ does not descend to~$\bar{K}_c$; in particular,
it does not lift to an automorphism of~$S$.

Let $\pi \colon S \to K_c$ be the the map
induced by dividing~$S$ by the action of the sign change~$\sigma_c$ on~$c$. Note that
$\pi$ factors as $S \to S/\langle \sigma_c \rangle \to K_c$, where the first map
is a double cover ramified exactly on the eight conics whose image on~$\bar{S}$
is contained in the hyperplane $c = 0$, and the second map contracts the images
of the exceptional curves corresponding to singular points with $c = 0$ on~$\bar{S}$.
Let $\cE_\pi$ denote the set of these exceptional curves on~$S$. From the
factorization of~$\pi$ above, we easily deduce the following, which will be
used in Section~\ref{SectCurves} below.

\begin{lemma} \label{L:pi}
  For~$C$ a class in~$\Pic S$, we have
  \[ \pi^* \pi_* C = C + \sigma_c(C) + \sum_{E \in \cE_\pi} (C \cdot E) E \,. \]
\end{lemma}

Among the quadrics containing~$\bar{K}_c$, there are exactly three of rank~3
(the three defining ones) and six of rank~4 (of the form
$a_j^2 - a_k^2 + b_j^2 - b_k^2 = 0$ or $2 (a_j^2 - b_j^2) + b_1^2 + b_2^2 + b_3^2 = 0$).
In the same way as for the genus~5 fibrations on~$S$,
this defines $3 + 2 \cdot 6 = 15$ elliptic fibrations on~$K_c$.

We find that the images on~$K_c$ of the curves in~$\cG$ generate a subgroup~$G$
of rank~20, and therefore of finite index, in~$\Pic K_c$. The determinant
of the intersection pairing is~$-32$, so the subgroup is saturated at all primes
except possibly~2. We can check that the natural map $\pi^* \colon \Pic K_c \to \Pic S$
induces an injection $G/2G \to \Pic S/2\Pic S$, which shows that~$G$ is also
2-saturated, whence $G = \Pic K_c$. This proves the following.

\begin{lemma} \label{L:Kc_even}
  The Picard group of~$K_c$ is generated by the images of the curves in~$\cG$.
  In particular, every class in~$\Pic K_c$ has even intersection with the
  class of hyperplane sections.
\end{lemma}

So there are no curves of odd degree on~$\bar{K}_c$.

Regarding curves of degrees 2 and~4, we have the following.

\begin{lemma} \label{L:Kc_lowdeg}
  Let $C \subset \bar{K}_c$ be an integral curve.
  \begin{enumerate}[\upshape(1)]
    \item If~$C$ is a plane curve, then~$C$ is a conic, and all conics on~$\bar{K}_c$
          are images of curves in~$\cG$; there are 44 such conics.
    \item If~$C$ spans a~$\BP^3$, then $\deg C = 4$ and~$C$ is a fiber of one of
          the 15~elliptic fibrations mentioned above.
    \item If $\deg C = 4$ and~$C$ is not contained in a~$\BP^3$, then~$C$ is a
          smooth rational normal curve that meets the branch locus transversally
          in eight points. There are 56 such curves on~$\bar{K}_c$.
  \end{enumerate}
\end{lemma}

\begin{proof} \strut
  \begin{enumerate}[(1)]
    \item If~$C$ is a plane curve, then~$C$ is (contained in) an intersection of
          quadrics in the plane it spans. Since there are no curves of odd degree
          on~$\bar{K}_c$,~$C$ must be a conic.

          \noindent
          By enumerating all elements~$C$ of~$\Pic K_c$ with $C \cdot H' = 2$ (where $H'$
          is the hyperplane class) and $C^2 = -2$ and excluding those with negative intersection
          with the curves we know, we find that the only conics on~$\bar{K}_c$ are those obtained
          as images of curves in~$\cG$. There are three different kinds (each forming one orbit
          under the subgroup of~$\Aut(K_c)$ induced by~$\Aut(S)$), namely (i) the eight
          conics whose union is the branch locus, (ii) twelve conics in the hyperplanes $a_j = 0$,
          and (iii) 24~conics obtained as quotients of the genus~1 curves in~$\cG$ that are
          invariant under the involution that changes the sign of~$c$.
    \item As before,~$C$ is contained in an intersection of quadrics in the~$\BP^3$ it spans.
          Since $\deg C$ is even and conics span planes, we must have $\deg C = 4$
          and $p_a(C) = 1$, so $C^2 = 0$. By enumerating all relevant elements of~$\Pic K_c$
          as before, we find that they all correspond to fibers of one of the 15~fibrations.
    \item Any curve of degree~4 that spans~$\BP^4$ is a rational normal curve.
          We then have $C^2 = -2$. We can enumerate the relevant elements again
          and check that their intersection with the branch locus is~8. There are 80
          such classes, falling into orbits under~$\Aut(K)$ of sizes 8, 24 and~48.
          The orbit of size~8 comes from the images of the conics in the branch locus
          under the extra involution. The orbit of size~24 does not come from curves,
          since its elements have intersection multiplicity~2 with one of the exceptional
          curves (so the curve would have to have a singularity there). The orbit of
          size~48 comes from curves in hyperplane sections given by
          $a_1 + \sqrt{2} i a_2 + a_3 + b_2 = 0$ and their images under sign changes
          and permutations.
    \qedhere
  \end{enumerate}
\end{proof}

If $C \subset \bar{K}_c$ is an integral curve of degree~6 contained in a~$\BP^4$,
then the residual intersection with any hyperplane that contains~$C$ is a conic.
If the conic is not contained in the branch locus, then it meets four of the eight
branch conics transversally in one point each. So the residual sextic also has to meet
these four branch conics transversally in one point each (note that the branch
locus is disjoint from the singularities of~$\bar{K}_c$). (It will also meet the
other four branch conics with multiplicity~$2$.) The pull-back of the sextic to~$\bar{S}$ will
be irreducible of degree~$12$ unless the sextic meets the branch locus with
even multiplicity in each intersection point. So, for the sextic to lift to a sextic
on~$\bar{S}$, it has to pass through two points in which two of the branch conics
intersect. This means that the hyperplane has to pass through these points as well.
Note that the four branch conics split into two pairs such that the conics in each
pair intersect in two distinct points, whereas the conics in distinct pairs
do not intersect.

There are two orbits of non-branch conics under the subgroup of~$\Aut(\bar{K}_c)$
induced by~$\Aut(\bar{S})$. For one of these orbits, the intersection points
of the non-branch conic with the four relevant branch conics are distinct from
the intersection points of the branch conics. So, in this case, we need to
consider hyperplanes containing the non-branch conics and one intersection point
for each pair of branch conics. Of these four hyperplanes, only two contain a
sextic curve (the other two contain another conic), and each of these sextics
meets the branch locus in some points with multiplicity one, and so lifts to
an irreducible curve of degree~$12$ on~$\bar{S}$.

For the other orbit, the non-branch conic intersects the branch conics at
intersection points of the pairs (one intersection point for each pair).
In this case, the hyperplane must either pass through the other intersection
point of a pair or contain the tangent lines at the intersection point
of the two branch conics. Three of the possible four combinations of
conditions lead to a unique hyperplane, whose intersection with~$\bar{K}$
we can check does not contain an integral sextic curve. The last possibility
(which occurs when the hyperplane contains the tangent lines at both intersection
points) leads to a pencil of hyperplanes. However, the intersection of~$\bar{K}$
with their common~$\BP^3$ already contains the initial non-branch conic
with multiplicity~$2$, so none of the hyperplanes can contain a sextic on~$\bar{K}$.

If the conic is a branch conic, then we can check explicitly that no hyperplane
containing the~$\BP^2$ spanned by the conic is tangent to all four branch conics
that do not meet the original one. This again implies that the pull-back to~$\bar{S}$
of the residual sextic ramifies.

\medskip

We obtain the following result.

\begin{lemma} \label{L:Kc_deg6P4}
  If $C \subset \bar{K}_c$ is an integral sextic curve that is contained in a~$\BP^4$,
  then~$C$ lifts to an irreducible curve of degree~12 on~$\bar{S}$.
\end{lemma}


\section{Curves of low degree on~$\bar{S}$} \label{SectCurves}

We can use our explicit knowledge of the Picard group to determine the set
of curves on~$S$ or~$\bar{S}$ of small degree.
We freely identify curves on~$\bar{S}$ with their strict transforms on~$S$
and with their classes in~$\Pic S$.

\begin{Theorem} \label{T:smalldeg} \strut
  \begin{enumerate}[\upshape(1)]
    \item \label{conics}
          All conics on~$\bar{S}$ are contained in~$\cG$.
    \item \label{rat4}
          The surface~$\bar{S}$ does not contain smooth rational curves of degree~4.
    \item \label{elliptic}
          All curves of degree~4 and arithmetic genus~1 on~$\bar{S}$ are in~$\cG$.
          In particular, all such curves are smooth and hence of geometric
          genus~1.
  \end{enumerate}
\end{Theorem}

\begin{proof} \strut
  \begin{enumerate}[(1¸)]
    \item Any conic~$C$ in~$\bar{S}$ must be smooth, since~$\bar{S}$ does not contain
          curves of odd degree. We have $C \cdot K_S = 2$ and $C^2 = -4$ by the
          adjunction formula. We can enumerate all lattice points in the Picard lattice
          satisfying these two conditions; this results in 2048~elements. If~$C$ is
          a curve, then it has to have nonnegative intersection with all the curves
          in~$\cG$ (except possibly itself). Testing this condition leaves only
          the 32 known conics in~$\cG$.
    \item If~$C$ is a smooth rational curve of degree~4 on~$\bar{S}$, then~$C$ spans
          a~$\BP^4$ (otherwise $p_a(C) = 1$, see Theorem~\ref{T:no_plane_curves} below),
          which we will denote~$P_4$.
          The image of~$C$ in~$\bar{K}_c$ then either is isomorphic to~$C$ and hence
          a rational normal curve of degree~4. But these all lift to hyperelliptic
          curves of genus~3 on~$\bar{S}$ by Lemma~\ref{L:Kc_lowdeg}. Or else the image
          spans a~$\BP^3$; this will be the case when $P_4$
          contains the point $[0,0,0,0,0,0,1]$. We can consider $K_{a_j}$ instead
          of~$K_c$ for any $j \in \{1,2,3\}$, so the only remaining cases are those
          when $P_4$ contains the $\BP^3$ given by $b_1 = b_2 = b_3 = 0$.
          Projecting away from this~$\BP^3$, we obtain the morphism
          $S \to Q \cong \BP^1 \times \BP^1$ mentioned in Section~\ref{SectMod}.
          Then $P_4$ must be the pull-back of one of the lines on~$Q$, and
          $P_4 \cap \bar{S}$ is a fiber of one of the two corresponding fibrations.
          But no such fiber contains a rational normal curve of degree~4.
    \item This statement is proved in the same way as the result on conics.
          A curve~$C$ of degree~4 and arithmetic genus~1 satisfies $C^2 = -4$.
          We can enumerate the relevant 16\,160 classes in~$\Pic S$ and check
          that none of them has nonnegative intersection with all curves in~$\cG$.
    \qedhere
  \end{enumerate}
\end{proof}

We now show that we know all curves on the surface~$\bar{S}$ that are
contained in a low-dimensional linear subspace.

\begin{Theorem} \label{T:no_plane_curves}
  Let $C \subset \bar{S}$ be an integral curve.
  \begin{enumerate}[\upshape(1)]
    \item If~$C$ is contained in a plane, then~$C$ is a conic; in particular, $C \in \cG$.
    \item If~$C$ spans a~$\BP^3$, then~$C$ is one of the genus~1 curves in~$\cG$.
    \item If~$C$ spans a~$\BP^4$, then~$C$ has degree~8 and is a fiber of one of
          the 28~fibrations defined in Section~\ref{SectFib}. In particular,~$C$ is
          either a smooth canonical curve of genus~5, or else it is one of the
          96~hyperelliptic curves of genus~3 occurring as irreducible singular fibers.
  \end{enumerate}
\end{Theorem}

\begin{proof} \strut
  \begin{enumerate}[(1)]
    \item Let $P_2$ be the plane spanned by~$C$ ($C$ cannot be a line, since there
          are no curves of odd degree on~$\bar{S}$). Then~$C$ is an intersection
          of quadrics in~$P_2$, so~$C$ is a conic. By Theorem~\ref{T:smalldeg},
          it follows that $C \in \cG$.
    \item Let $P_3$ be the linear subspace spanned by~$C$. Then~$C$ is contained
          in an intersection of quadrics in~$P_3$; in particular, the degree of~$C$
          is at most~4 (and even). Since~$C$ is not a plane curve, the degree
          must be~4, hence by Theorem~\ref{T:smalldeg}, $C \in \cG$.
    \item Let $P_4$ be the linear subspace spanned by~$C$. Again~$C$ is contained
          in an intersection of quadrics, so the degree of~$C$ is now 6 or~8.
          If the degree were~6, then the image of~$C$ on~$\bar{K}_c$ would be
          either a cubic (which is impossible) or a sextic contained in a hyperplane.
          But Lemma~\ref{L:Kc_deg6P4} shows that curves of the latter type do not
          come from sextics on~$\bar{S}$. So the degree of~$C$ is~8, and $C$
          is an intersection of three quadrics. Therefore the four quadrics
          defining~$\bar{S}$ will be linearly dependent when restricted to~$P_4$,
          which means that there is a quadric~$Q$ containing~$\bar{S}$ that contains~$P_4$.
          But a quadric in~$\BP^6$ that contains a~$\BP^4$ has rank at most~4.
          So~$Q$ is either one of the six rank~3 quadrics or one of the eleven
          rank~4 quadrics vanishing on~$\bar{S}$. In the former case, $P_4$ is
          the preimage of a point on the conic obtained by projecting~$Q$, in the
          latter case, $P_4$ is the preimage of a line on the quadric in~$\BP^3$
          obtained by projecting~$Q$. In either case, we see that $C = \bar{S} \cap P_4$
          is a fiber of one of the associated fibrations. The bad fibers have been
          described in Section~\ref{SectFib}.
     \qedhere
  \end{enumerate}
\end{proof}

\begin{theorem} \label{T:no_sextics}
  There are no integral curves of degree~6 on~$\bar{S}$.
\end{theorem}

\begin{proof}
  Let $C \subset \bar{S}$ be an integral curve of degree~6. By Theorem~\ref{T:no_plane_curves},
  we know that~$C$ spans a~$\BP^5$ or~$\BP^6$; it follows that $p_a(C) \in \{0, 1\}$, so
  $C^2 \in \{-8, -6\}$. The image~$C'$ of~$C$ on~$\bar{K}_c$ will still be of degree~6
  (there are no curves of odd degree on~$K_c$) and span~$\BP^5$, since by
  Lemma~\ref{L:Kc_deg6P4}, no curve of degree~6 in a hyperplane section of~$\bar{K}_c$
  is the image of a sextic on~$\bar{S}$. So $p_a(C') \in \{0, 1\}$ as well, and
  $(C')^2 \in \{-2, 0\}$. We enumerate all classes~$C'$ in~$\Pic K_c$ having
  intersection~6 with the hyperplane section and self-intersection $-2$ or~$0$,
  such that $C'$ has nonnegative intersection with all curves obtained as images
  from curves in~$\cG$. This results in 1088 such classes with $(C')^2 = 0$
  and 1680~classes with $(C')^2 = -2$, which fall into 14 and 20~orbits, respectively,
  under the group of linear automorphisms of~$\bar{K}_c$. If such a class~$C'$
  is the image of an integral curve~$C$ on~$\bar{S}$, then by Lemma~\ref{L:pi},
  we must have
  \[ \pi^* C' = \pi^* \pi_* C
              = C + \sigma_c(C) + \sum_{E \in \cE_\pi} (C \cdot E) E
              = \tau(C) \,,
  \]
  where $\tau \colon \Pic S \to \Pic S$ is the endomorphism defined by the expression above.
  For a representative~$C'$ of each of the relevant orbits, we first check if $\pi^* C'$
  is in the image of~$\tau$ (otherwise $C'$ is not in the image of~$\pi_*$).
  If this is the case, then we find the (finitely many) `candidate classes'~$C \in \Pic S$
  satisfying (i) $\tau(C) = \pi^* C'$, (ii) $C^2 \in \{-6, -8\}$, (iii) $C \cdot D \ge 0$
  for all $D \in \cG$. In all cases, it turns out that the set of candidate classes
  is empty. This shows that there are no integral curves of degree~6 on~$\bar{S}$
  that span a~$\BP^5$ or~$\BP^6$, which proves the claim.
\end{proof}

\begin{Corollary}
  The set $\cG$ consists precisely of the integral curves~$C$ on~$S$ such that
  $C \cdot K_S \le 6$.
\end{Corollary}

\begin{proof}
  We know that all curves in~$\cG$ satisfy the conditions.
  Observe that for an integral curve~$C$ on~$S$,
  we must have $C \cdot K_S \in \{0, 2, 4, \dots\}$. So if $C \cdot K_S \le 6$,
  we have $C \cdot K_S \in \{0, 2, 4, 6\}$. If the intersection number vanishes,
  then~$C$ is an exceptional curve and so $C \in \cG$. If $C \cdot K_S = 2$,
  then the image of~$C$ on~$\bar{S}$ is a conic, and so $C \in \cG$ by
  statement~\eqref{conics} in Theorem~\ref{T:smalldeg}.
  If $C \cdot K_S = 4$, then the image of~$C$ on~$\bar{S}$ is a curve of degree~4,
  which has arithmetic genus either
  zero (then~$C$ is a smooth rational curve), one or three. The first two cases are
  taken care of by statements \eqref{rat4} and~\eqref{elliptic} in Theorem~\ref{T:smalldeg},
  respectively. In the last case,~$C$ is a plane quartic, and this is ruled out
  by Theorem~\ref{T:no_plane_curves}. Finally by Theorem~\ref{T:no_sextics},
  $C \cdot K_S \neq 6$.
\end{proof}

Even though there are no plane quartics on~$\bar{S}$, we can find some more
curves of genus~3. For this, we consider the quotient $S \to K_c$ we studied
in Section~\ref{SectK3}. The extra involution on~$K_c$ does not lift to an
automorphism of~$S$, which allows us to produce
new curves by projecting a known curve to~$K_c$, applying the involution and pulling
the result back to~$S$. For example, taking one of the conics in the branch locus
and applying the involution, we obtain a smooth rational curve of degree~4
meeting the branch locus in eight points. Its pull-back to~$\bar{S}$ therefore is
a hyperelliptic curve of genus~3. It is of degree~8 and contained in a hyperplane
section cut out by $b_1 \pm b_2 \pm b_3 = 0$; the residual intersection is another
one of these hyperelliptic curves. Since the hyperplane $c = 0$ meets the curve
exactly in the eight Weierstrass points, the curve is bicanonically embedded.
It is smooth and has self-intersection~$-4$. The eight curves obtained in this
way together with the further 24 similar curves obtained from replacing~$c$
by one of the~$a_j$ form an orbit under the automorphism group. They are all isomorphic
to the `octahedral' curve
\[ y^2 = x^8 + 14 x^4 + 1 \,. \]

Pulling back the other rational quartics from~$\bar{K}_c$ to~$\bar{S}$ (and similarly
from the~$\bar{K}_{a_j}$), we find 192 further smooth hyperelliptic genus~3 curves
of degree~8 and spanning a~$\BP^5$. They sit in hyperplane sections of the form
\[ a_1 + \sqrt{2} i a_2 + a_3 + b_2 = 0 \]
(modulo the action of~$\Aut(S)$) and are isomorphic (over~$\Q(i,\sqrt{2})$) to
\begin{align*}
  y^2 &= x^8 + 4 x^6 + 32 x^5 + 86 x^4 + 64 x^3 + 36 x^2 + 32 x + 17 \,.
\end{align*}

We find some further interesting curves by considering images of graphs of automorphisms
of~$X$ under the map $X \times X \to \bar{S}$. The centralizer of~$G_0$ in~$\Aut(X)$
has order~32. There are three times 32 further elements that centralize one of the
subgroups of~$G_0$ given by sign changes of the type $(-,+,+)$ or~$(+,-,-)$. The
remaining 64~elements of~$\Aut(X)$ only commute with the sign change~$(-,-,-)$.
This implies that among the images in~$\bar{S}$ of these 192~curves, we find the
32~conics, three times 16 genus~1 curves (these are the genus~1 curves in~$\cG$
that are not contained in one of the hyperplanes $b_j = 0$) and finally 16~curves
isomorphic to~$X$ divided by the central element, which results in the Fermat quartic.
Its images on~$\bar{S}$ are bicanonically embedded; in particular, they span a
hyperplane, which is of the form $a_1 \pm a_2 \pm a_3 \pm ic = 0$ and contains two
of these 16~curves. Since they have negative self-intersection,
both the hyperelliptic and the non-hyperelliptic genus~3 curves
are not contained in the cone in~$\Pic S$ spanned by the curves in~$\cG$.

\medskip

The partial results above suggest the following question.

\begin{Question}
  Are all the curves of (geometric) genus at most~1 on~$S$ contained in~$\cG$?
\end{Question}

We can say the following. We write~$E$ for the exceptional divisor on~$S$.

\begin{Lemma} \label{L62} \strut
  \begin{enumerate}[\upshape(1)]
    \item Any rational curve~$C$ on~$\bar{S}$ must satisfy $C \cdot E \ge 6$.
    \item Any curve~$C$ of geometric genus~1 on~$\bar{S}$ must satisfy $C \cdot E \ge 2$.
  \end{enumerate}
\end{Lemma}

\begin{proof}
  Recall that~$Y$ as defined near the end of Section~\ref{SectMod} is a smooth
  double cover of~$\bar{S}$ branched exactly above the singular points of~$\bar{S}$.
  Note that~$Y$ maps to a genus~2 curve~$C_2$ with fibers that are isomorphic to the
  curve~$X$ of genus~5. So~$Y$ does not contain curves of genus $< 2$.
  Let~$C$ be a curve of geometric genus~$g$ on~$\bar{S}$ and let $\tilde{C} \to C$
  be its desingularization. The pull-back $\tilde{C} \times_C Y$ must then have
  geometric genus at least~$2$,
  it must therefore be ramified in at least six points when $g = 0$ and in at least two
  points when $g = 1$. This translates into the statements on intersection numbers with~$E$.
\end{proof}

In fact, the conics pass through six singularities each, so they lift to 32~sections of
the fibration of~$Y\!$, whereas the genus~1 curves in~$\cG$
lift to curves of genus 5 or~9 on~$Y\!$. Since~$Y$ maps to $C_2 \times C_2$,
any curve of genus~2 in~$Y$ must be isomorphic to~$C_2$ (and map to the
graph of an automorphism of~$C_2$). If $C' \subset Y$
is a curve isomorphic to~$C_2$, then its preimage in~$X \times X$ is a four-fold
\'etale cover of~$C'$ all of whose components must have genus at least~5.
This implies that the preimage is connected and isomorphic to~$X$, so
it is the graph of an automorphism~$\sigma$ of~$X$ (since it cannot be a fiber
of one of the projections). For $\sigma$ to arise in this way, the group~$G_0^+$
must stabilize its graph, which means that $\sigma$ is in the centralizer
of~$G_0$ in~$\Aut(X)$. This centralizer has order~32, so all of these curves
are accounted for by the lifts of the 32~conics on~$\bar{S}$. This implies
that all other curves must lift to curves of genus at least~$3$ on~$Y$,
and we obtain the following improvement of Lemma~\ref{L62}.

\begin{Lemma} \label{L:bounds} \strut
  \begin{enumerate}[\upshape(1)]
    \item A rational curve~$C$ on~$\bar{S}$ that is not a conic must satisfy $C \cdot E \ge 8$.
    \item A curve~$C$ of geometric genus~1 on~$\bar{S}$ must satisfy $C \cdot E \ge 4$.
  \end{enumerate}
\end{Lemma}

The first statement recovers a result from~\cite{GFU}.

\medskip

Note that surfaces of general type are conjectured to have only finitely
many curves of genus at most~1. In general, this is only known in very
few cases, for instance, when Bogomolov's inequality $(K_S)^2 > c_2(S)$
holds (see~\cite{Bo}); for surfaces contained in an abelian variety
(see~\cite{Fa}); for very general surfaces of large degree in projective
space (see work of Demailly, Siu, Diverio-Merker-Rousseau, B\'erczi on
the Green-Griffiths Conjecture).
None of these cases covers the
surface~$S$: Bogomolov's inequality fails for~$S$ since $(K_S)^2 = 16 < 80 = c_2(S)$;
the surface~$S$ is simply connected and hence is not contained in an abelian
variety; the surface~$S$ is not a surface in $\mathbb{P}^3$, since the only such
surfaces of general type with primitive canonical divisor are the surfaces of degree
five, and a plane section of a quintic is a curve of odd degree.

According to the Bombieri-Lang conjecture, the rational points on a variety
of general type are not Zariski-dense. This means that all but finitely
many rational points on a surface of general type lie on a finite set
of curves of genus zero or one on the surface. If the question above has
a positive answer, then the only such curves
defined over~$\Q$ on~$\bar{S}$ are the
conics corresponding to degenerate cuboids. So
the Bombieri-Lang conjecture would then imply that there are only finitely
many distinct rational boxes (up to scaling).


\begin{bibdiv}
\begin{biblist}

\bib{Beau}{article}{
   author={Beauville, Arnaud},
   title={A tale of two surfaces},
   conference={
      title={Higher dimensional algebraic geometry---in honour of Professor
      Yujiro Kawamata's sixtieth birthday},
   },
   book={
      series={Adv. Stud. Pure Math.},
      volume={74},
      publisher={Math. Soc. Japan, Tokyo},
   },
   isbn={978-4-86497-046-4},
   date={2017},
   pages={1--10},
   review={\MR{3791206}},
   doi={10.2969/aspm/07410001},
}

\bib{Bo}{article}{
   author={Bogomolov, F. A.},
   title={Families of curves on a surface of general type},
   language={Russian},
   journal={Dokl. Akad. Nauk SSSR},
   volume={236},
   date={1977},
   number={5},
   pages={1041--1044},
   issn={0002-3264},
   review={\MR{0457450}},
}

\bib{Magma}{article}{
   author={Bosma, Wieb},
   author={Cannon, John},
   author={Playoust, Catherine},
   title={The Magma algebra system. I. The user language},
   journal={J. Symbolic Comput.},
   volume={24},
   date={1997},
   number={3-4},
   pages={235--265},
   issn={0747-7171},
   review={\MR{1484478}},
   doi={10.1006/jsco.1996.0125},
   note={See also the Magma homepage at http://www.maths.usyd.edu.au:8000/u/magma/}
}

\bib{BTV}{article}{
   author={Bruin, Nils},
   author={Thomas, Jordan},
   author={V\'arilly-Alvarado, Anthony},
   title={Explicit computation of symmetric differentials and its
   application to quasihyperbolicity},
   journal={Algebra Number Theory},
   volume={16},
   date={2022},
   number={6},
   pages={1377--1405},
   issn={1937-0652},
   review={\MR{4488578}},
   doi={10.2140/ant.2022.16.1377},
}

\bib{Fa}{article}{
   author={Faltings, Gerd},
   title={The general case of S. Lang's conjecture},
   conference={
      title={Barsotti Symposium in Algebraic Geometry},
      address={Abano Terme},
      date={1991},
   },
   book={
      series={Perspect. Math.},
      volume={15},
      publisher={Academic Press, San Diego, CA},
   },
   isbn={0-12-197270-4},
   date={1994},
   pages={175--182},
   review={\MR{1307396}},
}

\bib{FM}{article}{
   author={Freitag, Eberhard},
   author={Salvati Manni, Riccardo},
   title={Parameterization of the box variety by theta functions},
   journal={Michigan Math. J.},
   volume={65},
   date={2016},
   number={4},
   pages={675--691},
   issn={0026-2285},
   review={\MR{3579180}},
   doi={10.1307/mmj/1480734014},
}

\bib{Di}{article}{
   author={Dimca, Alexandru},
   title={On the homology and cohomology of complete intersections with
   isolated singularities},
   journal={Compositio Math.},
   volume={58},
   date={1986},
   number={3},
   pages={321--339},
   issn={0010-437X},
   review={\MR{0846909}},
}

\bib{GFU}{article}{
   author={Garcia-Fritz, Natalia},
   author={Urz\'ua, Giancarlo},
   title={Families of explicit quasi-hyperbolic and hyperbolic surfaces},
   journal={Math. Z.},
   volume={296},
   date={2020},
   number={1-2},
   pages={573--593},
   issn={0025-5874},
   review={\MR{4140754}},
   doi={10.1007/s00209-019-02439-x},
}

\bib{vanLuijk}{thesis}{
  author={van Luijk, R.},
  title={On perfect cuboids},
  date={2000},
  organization={Unversiteit Utrecht},
  type={Undergraduate thesis},
}

\bib{Code}{misc}{
   author={Stoll, M.},
   title={Magma code verifying the results of Section~3},
   date={2025},
   url={https://www.mathe2.uni-bayreuth.de/stoll/magma/index.html\#Cuboids},
   note={\url{https://www.mathe2.uni-bayreuth.de/stoll/magma/index.html\#Cuboids}},
}

\bib{VarillyViray}{article}{
   author={V\'arilly-Alvarado, Anthony},
   author={Viray, Bianca},
   title={Failure of the Hasse principle for Enriques surfaces},
   journal={Adv. Math.},
   volume={226},
   date={2011},
   number={6},
   pages={4884--4901},
   issn={0001-8708},
   review={\MR{2775888}},
   doi={10.1016/j.aim.2010.12.020},
}

\bib{Vojta}{article}{
   author={Vojta, Paul},
   title={Diagonal quadratic forms and Hilbert's tenth problem},
   conference={
      title={Hilbert's tenth problem: relations with arithmetic and
      algebraic geometry},
      address={Ghent},
      date={1999},
   },
   book={
      series={Contemp. Math.},
      volume={270},
      publisher={Amer. Math. Soc., Providence, RI},
   },
   isbn={0-8218-2622-0},
   date={2000},
   pages={261--274},
   review={\MR{1802018}},
   doi={10.1090/conm/270/04378},
}

\end{biblist}
\end{bibdiv}

\end{document}